\numberwithin{equation}{section}
\def\XXint#1#2#3{{\setbox0=\hbox{$#1{#2#3}{\int}$ }
\vcenter{\hbox{$#2#3$ }}\kern-.6\wd0}}
\newcommand{\R}{\mathbb R}
\newcommand{\N}{\mathbb N}
\newcommand{\Q}{\mathbb Q}
\newcommand{\T}{\mathbb T}
\newcommand{\Z}{\mathbb Z}
\newtheorem{theorem}{Theorem}[section]
\newtheorem{definition}[theorem]{Definition}
\newtheorem{lemma}[theorem]{Lemma}
\newtheorem{proposition}[theorem]{Proposition}
\theoremstyle{remark}
\newtheorem{remark}[theorem]{Remark}
\theoremstyle{remark}
\newcommand{\bean}{\begin{eqnarray*}}
\newcommand{\eean}{\end{eqnarray*}}
\newcommand{\ben}{\begin{enumerate}}
\newcommand{\een}{\end{enumerate}}
\newcommand{\beq}{\begin{equation}}
\newcommand{\eeq}{\end{equation}}
\begin{document}

\author{Nicholas Gismondi}

\title{Nontrivial Integrable Weak Stationary Solutions to Active Scalar Equations with Non-Odd Drift}

\begin{abstract}
    In this paper we construct nontrivial weak solutions to a class of stationary active scalar equations with a non-odd nonlocal operator in the drift term using a convex integration scheme. We show our solutions lie in
    $$
    \bigcap_{0 < \epsilon < 1} \dot{B}^{-\epsilon}_{\infty,\infty}(\T^d) \cap L^{2-\epsilon}(\T^d)
    $$
    for $d \geq 2$. The key ingredient of the construction is the use of highly oscillatory corrections with a variable degree of intermittency, which is arranged to decrease to zero at higher stages of the iteration procedure.
\end{abstract}

\maketitle

\section{Introduction}

\subsection{Motivation and Background}
We consider the stationary active scalar equation:
\begin{equation}\label{eq:stat_eqn}
\begin{cases}
    \nabla \cdot (\theta u) + \Lambda^\gamma \theta = 0\\
    u = T\theta
\end{cases}
\end{equation}
where $\theta:\T^d \to \R$, $u:\T^d \to \R^d$, $d \geq 2$, $\Lambda^\gamma$ is the fractional Laplacian (see Definition \ref{def:laplace}) with $0 < \gamma \leq 2$, and $T$ is a convolution type Calderon-Zygmund operator which sends $\R$ valued functions to divergence free $\R^d$ valued ones and whose multiplier $m:\Z^d \to \R^d$ is not odd. We also assume that $m$ is homogeneous of order $0$, i.e. $m(\lambda \xi) = m(\xi)$ for all $\lambda > 0$ and $\xi \in \Z^d$, and coincides with a smooth function away from the origin.

The restriction to multipliers $m$ which are not odd is essential for the convex integration scheme employed in this paper. Indeed, the aim is to decompose the nonlinear term into its mean and projection off the mean, and then use the mean to cancel leading order errors generated by the highly oscillatory perturbations, which we denote $w:\T^d \to \R$. If $m$ was odd, this would force $T$ to be skew-adjoint on $L^2(\T^d)$, and hence we would have that
$$
\int_{\T^d} w Tw = 0
$$
for $w$ smooth. So in the case when the multiplier is odd, it becomes impossible to cancel the leading order errors using the mean of the nonlinear term.

To overcome this obstruction, one must instead assume the error term can be written as the divergence of a symmetric matrix, and then "invert the divergence." While this approach is feasible for certain specific odd multipliers, it appears to be beyond the reach of current convex integration methods for general skew-adjoint Calderon–Zygmund operators. We refer the reader to the discussion below concerning the SQG equation for an example of a situation in which this strategy is feasible.

Having clarified the structural restriction imposed on the  multipliers, we now briefly recall the corresponding non-stationary active scalar equation. The non-stationary active scalar equation, or just active scalar equation for simplicity, is given by
\begin{equation}\label{eq:ase}
    \begin{cases}
        \partial_t \theta + \nabla \cdot (\theta u) + \Lambda^\gamma \theta = 0\\
        u = T\theta
    \end{cases}
\end{equation}
where $\theta: [0,T) \times \T^d \to \R$ and $u:[0,T) \times \T^d \to \R$. The active scalar equation is said to be \textit{inviscid} if the $\Lambda^\gamma \theta$ term in ~\eqref{eq:ase} is removed. By convention when $\gamma = 0$ this refers to the inviscid case. Active scalar equations arise frequently in physical contexts, and we list three examples here:
\begin{enumerate}
    \item [(a)] The incompressible porous media (IPM) equation has multiplier give by
    $$
    m(\xi) = \frac{\langle \xi_1 \xi_2, -\xi_1^2\rangle}{|\xi|^2}
    $$
    in two dimensions while in three dimensions it is given by
    $$
    m(\xi) = \frac{\langle \xi_1\xi_3, \xi_2\xi_3, -\xi_1^2 - \xi_2^2\rangle}{|\xi|^2}.
    $$
    Notice this multiplier is even, bounded, homogeneous of order $0$, and smooth away from the origin, and therefore our proof will apply to both two and three dimensional stationary IPM.
    \item [(b)] The magneto-geostrophic (MG) equation is a three dimensional active scalar equation which has multiplier given by
    $$
    m(\xi) = 
    \begin{cases}
        \frac{\left\langle \xi_2\xi_3|\xi|^2 + \xi_1\xi_2^2 \xi_3, -\xi_1\xi_3|\xi|^2 + \xi_2^3 \xi_3, -\xi_2^2(\xi_1^2 + \xi_2^2)\right\rangle}{\xi_3^2|\xi|^2 + \xi_2^4}, & \xi_3 \not=0\\
        0, & \xi_3 = 0
    \end{cases}.
    $$
    We refer to \cite{FRV} and \cite{LM} for more information on the physical relevance of this equation. For us, there are two issues with this multiplier. The more benign issue is that it is not smooth away from the origin. This can be handled quite easily, see \ref{footnote}. The more serious issue is that this multiplier is not bounded. Indeed, observe that $|m(\lambda^2,\lambda,1)| \simeq \lambda^2$. This means the convolution type operator in the MG equation is not of Calderon-Zygmund type, and is thus out of reach of our method. Since the convex integration schemes employed in \cite{IV} and \cite{Shvydkoy} apply to both the IPM equation and the MG equation, there is some hope that either the method here can be modified or a completely new method may be utilized to handle operators not of Calderon-Zygmund type. We leave this as an interesting open question worthy of further study.
    \item [(c)] The surface quasi-geostrophic (SQG) equation is a two dimensional active scalar equation which has its multiplier given by
    $$
    m(\xi) = i\frac{\langle \xi_2,-\xi_1\rangle}{|\xi|}.
    $$
    We refer to \cite{Salmon} for a discussion of the physical application of the SQG equation. Notice the multiplier here is odd, so our result does not apply to this equation. See \cite{CKL} and \cite{GR} for treatments of stationary SQG, and \cite{BSV}, \cite{DGR}, and \cite{IL} for non-stationary SQG; the latter two of which resolve the Onsager conjecture for the inviscid SQG equation.
\end{enumerate}
In this paper we utilize the method of convex integration, first introduced by De Lellis and Sz\'{e}kelyhidi in \cite{DLS09} and \cite{DLS13}, to construct nontrivial weak solutions to ~\eqref{eq:stat_eqn} which lie in
$$
\theta,u \in \bigcap_{0 < \epsilon < 1} \dot{B}^{-\epsilon}_{\infty,\infty}(\T^d) \cap L^{2-\epsilon}(\T^d).
$$
The method of convex integration has been successfully applied to stationary partial differential equations which arise from fluid dynamics considerations before. For instance the method was used to construct nontrivial weak solutions to the stationary Navier-Stokes equations in dimensions 2 \cite{ABGN} and $d \geq 4$ \cite{Luo}. The solutions in \cite{Luo} are $L^2(\T^d)$, whereas in \cite{ABGN} they are only able to build solutions which lie in $\cap_{\epsilon > 0} L^{2-\epsilon}(\T^2)$. In a very similar procedure to \cite{Luo}, in \cite{Peng} $L^2(\T^3)$ solutions to the stationary electron MHD equation were constructed. In addition, as already noted, the method has successfully been applied to the stationary SQG equation to produce solutions which lie in $\dot{B}^\alpha_{\infty,\infty}(\T^2)$ for $-1/2 \leq \alpha \leq -1/2 + \min(1/6,3/2-\gamma)$ \cite{CKL} and in \cite{GR} it was utilized to produce nontrivial solutions which lie in $L^p(\T^2)$, $p < 4/3$.

In \cite{DW}, the same question is considered, but the authors are only able to construct solutions which lie in $\dot{B}^{\alpha - 1}_{\infty,\infty}(\T^d)$ for fixed $\alpha < 1$ and $0 < \gamma < 2 - \alpha$. The mechanism which allows us to also deduce the integrability is the use of intermittent building blocks in the construction of the highly oscillatory increments, see ~\eqref{eq:wq+1}. Intermittency has played a key role in many convex integration constructions; see for instance  \cite{ABGN}, \cite{BBV}, \cite{BCV}, \cite{BMNV}, \cite{BV19}, \cite{CL2}, \cite{CheskidovLuo}, \cite{DS17}, \cite{GR}, \cite{Luo}, \cite{MS}, \cite{NV}, \cite{Peng} and references therein.

It should be noted that as in \cite{DW}, our solutions unfortunately do not lie in $L^2(\T^d)$. This means that $\theta u$ is not $L^1(\T^d)$, and so an alternative notion of weak solution is required. One potential solution is provided by \cite{PGLR} where the pair $\theta$ and $u$ is said to be admissible if the term-by-term product of their Fourier series is absolutely convergent in some Sobolev space. We instead consider a variant of this idea from \cite{ABGN} and \cite{GR} to define $\mathbb{P}_{\not=0}(\theta u)$ as an element of $\dot{H}^{-s}(\T^d)$ for some $s$ chosen large enough if the term-by-term product of their paraproduct expansions is absolutely convergent in $\dot{H}^{-s}(\T^d)$. We then use dual pairings to define the notion of a weak solution; see Definitions \ref{def:paras} and \ref{def:weak_para_soln}. As was noted in \cite{DW}, the existence of nontrivial weak solutions to ~\eqref{eq:stat_eqn} which lie in $L^2(\T^d)$ seems to be a difficult open problem.

\subsection{Main Result}

As mentioned already, our first task is to extend the definition of a weak solution to ~\eqref{eq:stat_eqn} to $u$ and $\theta$ lying in some Sobolev space with strictly negative regularity. To do this, we need to make sense of the product $\theta u$. In general, there is not much that can be said about this product when both functions lie in a Sobolev space with negative regularity, but following \cite[Definition 1.1]{ABGN} we may offer the following definition of the mean free product:

\begin{definition}[\textbf{Paraproducts in $\dot H^s(\T^d)$}]\label{def:paras}
    Let $f,g$ be distributions, so that $\mathbb{P}_{2^j}(f), \mathbb{P}_{2^{j'}}(g)$ are well-defined for $j,j'\geq 0$ (see Definition \ref{def:projs}).  We say that $\mathbb{P}_{\not = 0}(fg)$ is well-defined as a paraproduct in $\dot{H}^s(\T^d)$ for some $s \in \R$ if
    $$
        \sum_{j,j' \geq 0} \left\Vert \mathbb{P}_{\not = 0}\left(\mathbb{P}_{2^j}(f) \mathbb{P}_{2^{j'}}(g)\right) \right\Vert_{\dot{H}^s} < \infty \, .
    $$
    Then we define
    $$
        \mathbb{P}_{\not = 0}(fg) = \sum_{j,j' \geq 0} \mathbb{P}_{\not = 0} \left(\mathbb{P}_{2^j}(f) \mathbb{P}_{2^{j'}}(g)\right) \, ,
    $$
    since the right-hand side is an absolutely summable series in $\dot H^s(\T^d)$.
\end{definition}

With this definition, adapting \cite[Definition 1.2]{ABGN}, we may now define a weak solution to \eqref{eq:stat_eqn} which is valid for $u$ and $\theta$ belonging to Sobolev spaces of arbitrary regularity.

\begin{definition}[\textbf{Weak paraproduct solutions to ~\eqref{eq:stat_eqn}}] \label{def:weak_para_soln}
    If $\theta \in \dot{H}^s$, $s < 0$, and $u = T \theta \in \dot{H}^s$, we say $u$ and $\theta$ form a weak paraproduct solution to the stationary active scalar equation if there is $s' \in \R$ such that $\mathbb{P}_{\not = 0}(\theta u)$ is well defined as a paraproduct in $\dot{H}^{s'}$ in the sense of the previous definition and
    $$
        \left\langle \theta, \Lambda^{\gamma} \phi \right\rangle_{\dot{H}^{s},\dot{H}^{-s}} - \left\langle \mathbb{P}_{\not= 0}(\theta u), \nabla \phi \right\rangle_{\dot{H}^{s'}, \dot{H}^{-s'}} = 0
    $$
    for all smooth $\phi$.
\end{definition}

With this, we may state our main result.

\begin{theorem}\label{thm:main}
    Given any $0 < \gamma \leq 2$ there exist
    $$
    u,\theta \in \bigcap_{0 < \epsilon < 1} \left(\dot{B}_{\infty,\infty}^{-\epsilon}(\T^d) \cap L^{2-\epsilon}(\T^d)\right) \setminus \{0\}
    $$
    such that
    \begin{enumerate}
        \item [(a)] $u = T\theta$;
        \item [(b)] $\mathbb{P}_{\not=0}(\theta u)$ exists as a paraproduct in $\dot{H}^{-s}(\T^d)$ for all $s > d/2 + \max(\gamma - 1, 0)$;
        \item [(c)] $u$ and $\theta$ solve Equation ~\eqref{eq:stat_eqn} in the sense of Definition \ref{def:weak_para_soln}.
    \end{enumerate}
\end{theorem}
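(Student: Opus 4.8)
The plan is to run a convex integration scheme producing a sequence $(\theta_q, u_q, R_q)$ solving the relaxed system $\nabla \cdot(\theta_q u_q) + \Lambda^\gamma \theta_q = \nabla \cdot R_q$ with $u_q = T\theta_q$, where $R_q$ is a vector-valued error (the "Reynolds stress" analogue) whose size $\|R_q\|_{L^1}$ (or an appropriate negative-regularity norm) decays to zero as $q \to \infty$. The iteration is governed by a frequency parameter $\lambda_q$ growing double-exponentially and an amplitude parameter $\delta_q \to 0$; the key twist advertised in the abstract is a variable intermittency parameter — the building blocks at stage $q$ occupy a fraction of the torus that shrinks as $q$ grows — so that one gets the critical $L^{2-\epsilon}$ integrability for every $\epsilon$ simultaneously, rather than a fixed $L^p$ bound. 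One would set up the perturbation as $w_{q+1} = \theta_{q+1} - \theta_q$ built from rescaled, highly oscillatory intermittent bump functions (cf. the referenced building blocks around \eqref{eq:wq+1}), modulated by amplitude coefficients chosen so that the low-frequency part of $\nabla \cdot (w_{q+1} T w_{q+1})$ cancels $\nabla \cdot R_q$; here is exactly where the non-oddness of $m$ is used, since one needs $\int_{\T^d} w\, Tw \neq 0$ for the mean of the quadratic term to be available as a cancelling device. The dissipative term $\Lambda^\gamma \theta_{q+1}$ and the various cross terms, quadratic high-frequency terms, and transport-of-old-error terms are absorbed into the new error $R_{q+1}$, and one checks inductively the estimates $\|\theta_{q+1} - \theta_q\|_{L^{2-\epsilon}} \lesssim \delta_{q+1}^{1/2}$, $\|\theta_q\|_{\dot B^{-\epsilon}_{\infty,\infty}} \lesssim 1$, and $\|R_{q+1}\| \ll \|R_q\|$.

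Concretely the steps are: (1) state the inductive proposition with explicit bounds on $\theta_q$ in $\dot B^{-\epsilon}_{\infty,\infty}$, in $L^{2-\epsilon}$, on $\nabla \theta_q$, and on $R_q$, with constants uniform in $q$ and the intermittency dimension tending to the non-intermittent value; (2) construct $w_{q+1}$ via intermittent building blocks adapted to a geometric decomposition of the target $-R_q$ into rank-one pieces, including the necessary "corrector" terms that restore $u_{q+1} = T\theta_{q+1}$ exactly and that fix up divergence/mean constraints; (3) estimate $w_{q+1}$ in the relevant norms using the scaling of the building blocks and standard $L^p$-interpolation / Littlewood–Paley bounds on $T$ and $\Lambda^\gamma$ (both Calderón–Zygmund type, hence bounded on the relevant spaces up to the homogeneity degrees); (4) define $R_{q+1}$ by collecting the oscillation error, the transport/dissipation error $\Lambda^\gamma w_{q+1}$, the Nash error, and the linear error, invert the divergence on each piece using a suitable order $-1$ operator (as in the standard antidivergence constructions), and bound each term, exploiting that high–low frequency separation gains powers of $\lambda_q/\lambda_{q+1}$; (5) pass to the limit $\theta = \lim \theta_q$, which converges in every $\dot B^{-\epsilon}_{\infty,\infty} \cap L^{2-\epsilon}$ by telescoping the summable increments, and verify nontriviality by arranging $\theta_1 \neq 0$ with the perturbations too small to cancel it; (6) verify that $\mathbb{P}_{\neq 0}(\theta u)$ exists as a paraproduct in $\dot H^{-s}$ for $s > d/2 + \max(\gamma-1,0)$ by summing the Littlewood–Paley pieces $\|\mathbb{P}_{\neq 0}(\mathbb{P}_{2^j}\theta \, \mathbb{P}_{2^{j'}} u)\|_{\dot H^{-s}}$ against the geometric decay of the increments, and finally pass the relaxed equation to the limit to get Definition \ref{def:weak_para_soln}, since $\|R_q\| \to 0$ kills the error in the weak formulation.

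I expect the main obstacle to be step (3)–(4) carried out with the variable intermittency: one must choose how fast the intermittency parameter $r_q$ (the transverse concentration scale) relaxes to its limiting value as a function of $q$, balancing three competing demands — the $L^{2-\epsilon}$ norm of $w_{q+1}$ must stay bounded for every $\epsilon$ (which wants weak intermittency, $r_q$ close to the full-dimension value), the oscillation and Nash errors must be summable (which is easier with strong intermittency, since concentrated blocks make the quadratic self-interaction and the antidivergence gains better), and the dissipative error $\|\Lambda^\gamma w_{q+1}\|$, which costs $\lambda_{q+1}^\gamma$ times amplitude, must be controlled (hardest when $\gamma$ is close to $2$, forcing a careful choice of the ratio $\delta_{q+1}$ vs. $\lambda_{q+1}$). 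Getting a single admissible schedule $(\lambda_q, \delta_q, r_q)$ that threads all of these for all $0<\gamma\le 2$ at once — and in particular handling the endpoint $\gamma = 2$, where the dissipation is as strong as possible — is the delicate technical heart of the argument; the rest is by now fairly standard convex integration bookkeeping, and the paraproduct bound in step (6) is essentially a corollary of the $\dot H^{-s}$ estimates one has already proved along the iteration.
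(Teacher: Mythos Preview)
Your high-level plan is essentially the paper's: an inductive proposition governing $(\theta_q,u_q,R_q)$, perturbations $w_{q+1}$ built from intermittent building blocks with a variable intermittency exponent tending to the non-intermittent limit, cancellation of $R_q$ by the \emph{mean} of $w_{q+1}Tw_{q+1}$ (which is exactly where non-oddness of $m$ enters), and passage to the limit verifying the paraproduct definition. The telescoping for regularity, the nontriviality via a persistent lower bound, and deducing (b) from the error estimates already accumulated are all as in the paper.

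However, you have misidentified the central technical mechanism, and this matters for whether your outline actually reaches the full range $0<\gamma\le 2$. The paper does \emph{not} estimate $R_q$ in $L^1$ and does \emph{not} apply an antidivergence operator. Instead the stress $R_q$ is a vector field estimated directly in $\dot H^{-s}$ for $s>d/2+\max(\gamma-1,0)$; the Nash and oscillation errors are already products (no inversion needed), and the dissipation error is simply $R_D=-\Lambda^{\gamma-2}\nabla w_{q+1}$. The point of the large negative Sobolev index is that
\[
\|R_D\|_{\dot H^{-s}}^2 \;=\; \sum_{\xi\neq 0}|\xi|^{-2s}|\xi|^{2\gamma-2}|\hat w_{q+1}(\xi)|^2 \;\lesssim\; \|w_{q+1}\|_{L^1}^2,
\]
since $-2s+2\gamma-2<-d$ makes the multiplier summable. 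So the dissipation error is controlled by $\|w_{q+1}\|_{L^1}$ alone, with no competing factor of $\lambda_{q+1}^\gamma$; the endpoint $\gamma=2$ is not special at all. The ``delicate balance'' you describe between intermittency and dissipation simply does not arise in this framework, and the parameter choice is correspondingly soft: the intermittency exponent follows the fixed schedule $\epsilon_{q+1}=1-2^{-q-1001}$, and $\lambda_{q+1}$ is just taken large enough at each step to beat finitely many inequalities.

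The genuine technical work in the paper is elsewhere: the oscillation error analysis in $\dot H^{-s}$, where one expands $w_{q+1,k}^{\pm}Tw_{q+1,k}^{\mp}$ as a bilinear Fourier integral, splits the slab product $\rho_{q+1}^k(x-y)\rho_{q+1}^k(x-z)$ into its $x$-mean (a Riemann sum converging to a fixed constant, yielding the cancellation against $R_q$ via Lemma~\ref{lem:lin_alg}) and its mean-free part (estimated in $\dot H^{-s}$ via a fractional Leibniz rule and an explicit Fourier-side convolution bound). Your step (4) should be rewritten around this decomposition rather than around an antidivergence gain.
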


\begin{remark}
    As in \cite{GR}, this appears to be the first construction of weak solutions to ~\eqref{eq:stat_eqn} on the torus which are \textit{integrable functions} and not merely \textit{distributions}.
\end{remark}

\begin{remark}
    Since our solutions $u,\theta \in L^p(\T^d)$ for all $p < 2$, from Sobolev embedding we also see that $u,\theta \in \dot{H}^{-\epsilon}(\T^d)$ for all $\epsilon > 0$.
\end{remark}

\begin{remark}
    In \cite{DW}, the authors use the following notion of a weak solution to Equation ~\eqref{eq:stat_eqn}: if for all smooth $\psi:\T^d \to \R$ with compact frequency support we have that
    \begin{equation}\label{eq:DW_soln_def}
        - \int_{\T^d} \theta u \cdot \nabla \psi + \int_{\T^d} \theta \Lambda^\gamma \psi = 0
    \end{equation}
    then we call the pair $u$ and $\theta$ a weak solution. Notice if we restrict our test functions to those which have compact frequency support, then we attain essentially the same definition since then using the self-adjointness of frequency truncations, we pass the smooth truncation onto $\mathbb{P}_{\not=0}(\theta u)$, and since this is well defined as an element of $\dot{H}^{-s}$, the frequency truncation will be smooth and in particular $L^1$. Therefore solutions in the sense of Definition \ref{def:weak_para_soln} will also be a solution in the sense of \cite[Definition 1.1]{DW}.
\end{remark}

\subsection{Outline of Paper}
In Section \ref{Section2} we review some material related to Littlewood-Paley theory and the function spaces we will be considering in this paper. We also introduce the intermittent building block we utilize in the construction. In Section \ref{Section3} we give a brief overview of the convex integration scheme to follow. In Section \ref{Section4} we state the main inductive proposition (Proposition \ref{prop:ind}) and use it to prove Theorem \ref{thm:main}. We then provide the definition of our highly oscillatory perturbation and use it to prove Proposition \ref{prop:ind} in Section \ref{Section5}.

\subsection{Acknowledgments} I am deeply thankful to Alexandru Radu for discussion in relation to \cite{GR}.

\section{Background Theory and Technical Lemmas}\label{Section2}

The following on Littlewood-Paley theory, Fourier multiplier operators, and function spaces can be found in \cite{BCD} and \cite{Grafakos}. The exact formulation of Definition \ref{def:projs} is based off \cite[Definition 2.1]{ABGN} and \cite[Equation 4.9]{BSV}.

\begin{definition}[\textbf{Littlewood-Paley projectors}]\label{def:projs}
    There exists $\varphi \colon \R^{d}\to[0,1]$, smooth, radially symmetric, and compactly supported in $\{\xi : 6/7 \leq |\xi|\leq 2\}$ such that $\varphi(\xi) = 1$ on $\{\xi : 1 \leq |\xi| \leq 12/7\}$,
    \begin{equation}
        \sum_{j\geq 0}\varphi(2^{-j}\xi)=1 \hspace{0.25cm} \text{ for all } \hspace{0.25cm} |\xi|\geq 1, \notag
    \end{equation}
    and $\operatorname{supp}\varphi_{j}\cap \operatorname{supp} \varphi_{j'}=\emptyset$ for all $|j-j'|\geq 2$, where $\varphi_j(\cdot) = \varphi(2^{-j}\cdot)$. We define the projection of a function $f$ on its $0$-mode by
    \begin{equation}
        \mathbb{P}_{=0}(f)=\int_{\T^d} f(x)\, dx, \notag
    \end{equation}
    and the projection on the $j^{\rm th}$ shell by
    \begin{equation}
        \mathbb{P}_{2^j}(f)(x)=\sum_{k\in \Z^d}\hat{f}(k)\varphi_{j}(k)e^{2\pi ik\cdot x} \, . \notag
    \end{equation}
    We also define $\mathbb{P}_{\neq 0}f:=(\operatorname{Id}-\mathbb{P}_{=0})f$, and we also denote by $\hat{K}_{\simeq 1}$ a smooth radially symmetric bump function with support in the ball $\{\xi: |\xi| < r\} $, which also satisfies $\hat{K}_{\simeq 1}(\xi)=1$ on the smaller ball $\{\xi: |\xi| \leq \frac{1}{2}r\}$ for $r \ll 1$ is a power of $2$ whose exact size will be specified in Section \ref{sec:param}. Then let $\mathbb{P}_{\leq \lambda}$ be the convolution operator that has $\hat{K}_{\simeq 1}\left(\frac{\xi}{\lambda}\right)$ as its Fourier multiplier.
\end{definition}

The following lemma is standard, and can be deduced as a consequence of the Poisson summation formula.

\begin{lemma}[\textbf{$L^p$ boundedness of projection operators}]\label{lem:proj}
    $\mathbb{P}_{\leq \lambda}$ is a bounded operator from $L^p$ to $L^p$ for $1 \leq p \leq \infty$ with operator norm independent of $\lambda$.
\end{lemma}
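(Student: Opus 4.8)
The plan is to realize $\mathbb{P}_{\leq\lambda}$ as convolution on $\T^d$ against a kernel whose $L^1(\T^d)$ norm is controlled uniformly in $\lambda$, and then invoke Young's convolution inequality. First I would write, for $f$ on $\T^d$,
$$
\mathbb{P}_{\leq\lambda}f(x)=\sum_{\xi\in\Z^d}\widehat{K}_{\simeq 1}(\xi/\lambda)\,\widehat f(\xi)\,e^{2\pi i\xi\cdot x}=(f*k_\lambda)(x),\qquad k_\lambda(x):=\sum_{\xi\in\Z^d}\widehat{K}_{\simeq 1}(\xi/\lambda)\,e^{2\pi i\xi\cdot x},
$$
which is in fact a finite trigonometric sum since $\widehat{K}_{\simeq 1}$ is compactly supported. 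Because $\widehat{K}_{\simeq 1}$ is a smooth compactly supported bump on $\R^d$, its inverse Fourier transform $K_{\simeq 1}:=(\widehat{K}_{\simeq 1})^{\vee}$ is a Schwartz function, and the dilated multiplier $\widehat{K}_{\simeq 1}(\cdot/\lambda)$ is the Fourier transform of $y\mapsto\lambda^d K_{\simeq 1}(\lambda y)$.

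Next I would apply the Poisson summation formula to this Schwartz function, obtaining
$$
k_\lambda(x)=\sum_{n\in\Z^d}\lambda^d K_{\simeq 1}\big(\lambda(x+n)\big).
$$
Bringing the sum inside the integral over $\T^d$ and unfolding against the lattice translates gives
$$
\|k_\lambda\|_{L^1(\T^d)}\le\int_{\T^d}\sum_{n\in\Z^d}\lambda^d\big|K_{\simeq 1}(\lambda(x+n))\big|\,dx=\int_{\R^d}\lambda^d\big|K_{\simeq 1}(\lambda y)\big|\,dy=\|K_{\simeq 1}\|_{L^1(\R^d)},
$$
a fixed finite constant independent of $\lambda$. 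Then Young's inequality on $\T^d$ yields $\|\mathbb{P}_{\leq\lambda}f\|_{L^p}=\|f*k_\lambda\|_{L^p}\le\|k_\lambda\|_{L^1(\T^d)}\|f\|_{L^p}\le\|K_{\simeq 1}\|_{L^1(\R^d)}\|f\|_{L^p}$ for every $1\le p\le\infty$, which is the assertion.

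The argument has no genuine obstacle; the only points requiring care are the bookkeeping in the Poisson summation formula (matching Fourier-transform conventions and tracking the dilation scaling $\widehat{K}_{\simeq 1}(\cdot/\lambda)\leftrightarrow\lambda^dK_{\simeq 1}(\lambda\,\cdot)$) and the absolute convergence of $\sum_{n}\lambda^dK_{\simeq 1}(\lambda(x+n))$ together with the interchange of sum and integral, both of which follow at once from the Schwartz decay of $K_{\simeq 1}$. No cancellation or smoothness beyond $\widehat{K}_{\simeq 1}\in C_c^\infty$ is used, and the bound is in fact uniform over all $\lambda>0$.
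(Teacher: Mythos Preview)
Your proof is correct and follows exactly the approach the paper indicates: the paper simply states the lemma is standard and ``can be deduced as a consequence of the Poisson summation formula,'' without giving further details. Your periodization-via-Poisson-summation of $\lambda^d K_{\simeq 1}(\lambda\,\cdot)$ followed by Young's inequality is precisely what is intended.
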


\begin{definition}[\textbf{Fractional Laplacian}]\label{def:laplace}
    For $u:\T^d \to \R$ and $s \in \R$, define the fractional Laplacian operator $\Lambda^s = (-\Delta)^{s/2}$ by
    $$
        \left(\Lambda^su\right)^{\wedge}(\xi) = |2\pi \xi|^s \hat{u}(\xi) \quad \forall \xi \in \Z^d.
    $$
    When $s < 0$ we restrict the above definition to just those $\xi \not = 0$.
\end{definition}

Next we introduce the function spaces which will play a prominent role throughout.

\begin{definition}[\textbf{$\dot H^s$ Sobolev spaces}] For $s \in \mathbb{R}$, we define
    $$\dot{H}^{s}(\mathbb{T}^{d})=\left\{ f: \sum_{\xi \in \Z^d \setminus \{0\}}|\xi|^{2s}|\hat{f}(\xi)|^2<\infty\right\}  $$
    with the norm induced by the sum above.
\end{definition}
\begin{remark}
    For every $f\in \dot H^{s}$ for some $s\in \R$, we define the Fourier coefficients
    \[\hat{f}(\xi)=\int_{\T^{d}}f(x)e^{-2\pi i\xi\cdot x}\,dx, \hspace{0.25cm} \text{ where } \hspace{0.25cm} \T^{d}=[0,1]^{d},\]
    and so we can define $\mathbb{P}_{2^j}f$ for $j\geq 0$.  Note that each $\mathbb{P}_{2^j}f$ is smooth if $f\in \dot H^{s}$ irrespective of the value of $s\in \R$.
\end{remark}
\begin{remark}
    For $s > 0$ the following equivalent definition of the $\dot{H}^{-s}$ norm given by
    $$
        \Vert f \Vert_{\dot{H}^{-s}} = \sup_{\Vert \phi \Vert_{\dot{H}^s = 1}} \left|\left\langle f,\phi \right\rangle_{\dot{H}^{-s},\dot{H}^s}\right|
    $$
    is available. With this definition, it is easy to see that the $\dot{H}^{-s}$ norm can be pushed inside integrals, which we perform quite often in Section \ref{Section5} without comment.
\end{remark}

\begin{definition}{\textbf{(Homogeneous Besov spaces)}}
    For $\alpha \in \R$, $0<p,q\leq \infty$, and $f \in \mathcal{D}'(\T^d)$, define the homogeneous Besov space to be
    \[
        \dot{B}^{\alpha}_{p,q} = \left\{ f: \|f\|_{\dot{B}^{\alpha}_{p,q}} = \left\| 2^{j\alpha}\|\mathbb{P}_{2^j}f\|_{L^p(\T^d)}\right\|_{\ell^q_{j\geq 0}} <\infty \right\}.
    \]
\end{definition}
\begin{remark}
    Note for $\alpha > 0$ the space $\dot{B}^{\alpha}_{\infty,\infty}$ is equivalent to $\{f \in C^\alpha : \hat{f}(0) = 0\}$. In addition, the space $\dot{B}^{\alpha}_{2,2}$ is equivalent to the homogeneous Sobolev space $\dot{H}^\alpha$ for all $\alpha \in \R$.
\end{remark}

One of our first tasks will be to establish an analogue of "reconstruction lemmas" that are frequently utilized in convex integration constructions. The purpose of this is in our oscillation error estimates we will need to cancel to what we call the stress field from the previous step of the iteration. This is accomplished by considering quadratic expressions involving our Nash iterates, extracting a low frequency term, and then utilizing this reconstruction lemma (Lemma \ref{lem:lin_alg}) to achieve the desired cancellation. However, as we will see, the vectors available to us to perform this cancellation procedure lie in the image of $m(\xi) + m(-\xi)$. This means we first must ensure that we are able to extract a basis from the image of $m(\xi) + m(-\xi)$, and in particular this basis we extract must satisfy certain properties that will prove useful to us in the analysis. In two dimensions, this can be done. However, as was noted in \cite{IV}, in dimensions three and higher this must instead be taken as an assumption. For two dimensions, the existence of such a basis as well as all of the properties we will utilize is the content of Lemmas \ref{lem:NT} and \ref{lem:nondegen}.

\begin{lemma}[\textbf{Lattice vectors within a given arc}]\label{lem:NT}
Fix an open arc $U \subset \mathbb{S}^1$. Then there are $v_1,v_2 \in \Z^2$ such that $\{v_1,v_2\}$ form a basis of $\R^d$, $|v_1| = |v_2| \in \Z$, and $v_1/|v_1|, v_2/|v_2| \in U$.
\end{lemma}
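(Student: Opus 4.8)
The plan is to exploit the classical fact that Pythagorean-type parametrizations produce a dense set of directions on $\mathbb{S}^1$ realized by integer vectors of equal length. Concretely, given an angle $\phi$, the rotation by $\phi$ applied to a fixed vector need not land on the lattice, but I can instead look for pairs $(v_1, v_2)$ of the form $v_1 = (a,b)$, $v_2 = (-b,a)$ with $a,b \in \Z$; these automatically satisfy $|v_1| = |v_2| = \sqrt{a^2+b^2}$ and are orthogonal, hence a basis of $\R^2$. The direction of $v_1$ is $(a,b)/\sqrt{a^2+b^2}$ and that of $v_2$ is its rotation by $\pi/2$. So the task reduces to: find $(a,b) \in \Z^2$ such that both $(a,b)/\sqrt{a^2+b^2}$ and $(-b,a)/\sqrt{a^2+b^2}$ lie in the prescribed open arc $U$. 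This is not possible for an \emph{arbitrary} arc using only the orthogonal pairing (if $U$ is a tiny arc, $v_1$ and $v_2 = v_1^\perp$ can't both point into it), so I need a more flexible construction: fix the target to be \emph{two} nearby directions inside $U$ and find equal-length integer vectors pointing (approximately) along each.

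The key step is the following density/approximation fact. The set of directions $\{\,p/|p| : p \in \Z^2 \setminus \{0\}\,\}$ is dense in $\mathbb{S}^1$, and moreover for any prescribed target direction and any tolerance one can find an integer vector within that tolerance whose \emph{squared length} $N = |p|^2$ is a sum of two squares in at least two essentially different ways — equivalently, $N$ has enough prime factors congruent to $1 \bmod 4$. Pick $N$ a product of many distinct primes $\equiv 1 \bmod 4$; then the circle $\{p \in \Z^2 : |p|^2 = N\}$ has $\gtrsim 2^{(\text{number of such primes})}$ points, and as $N \to \infty$ through such values these points equidistribute on the circle of radius $\sqrt N$ (by the work of Fermat/Jacobi on $r_2(N)$ together with equidistribution results for lattice points on circles; for the purposes of this lemma one only needs that the gaps between consecutive angular positions go to $0$, which already follows from $r_2(N) \to \infty$ along such $N$ plus a pigeonhole argument, or more elementarily from iterating the Gaussian-integer multiplication $(a+bi)(c+di)$). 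Choose two of these lattice points $v_1, v_2$ on the same circle $|v|^2 = N$ whose directions both lie in $U$ and which are not parallel — possible once the angular gaps are smaller than $|U|$ and there are at least three points in the arc, so that two of them are linearly independent. Then $|v_1| = |v_2| = \sqrt N \in \Z$ provided we additionally arrange $N$ to be a perfect square, e.g. replace $N$ by $N^2$: the circle $|v|^2 = N^2$ contains all the "doubled" representations and still has many points with dense directions, and now the common length is the integer $N$. Rescaling preserves directions, so $v_1/|v_1|, v_2/|v_2| \in U$ as required, and being nonparallel in $\R^2$ they form a basis.

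The main obstacle is the simultaneous requirement that the two integer vectors have \emph{equal integer length} and \emph{independent directions both inside a possibly very small arc}. Independence and "both in $U$" force the construction away from the naive orthogonal pair and onto a single circle $|v| = r$ carrying many lattice points clustered in angle; equal \emph{integer} length then forces $r \in \Z$, i.e. $r^2$ a perfect square that is also a sum of two squares in many ways, which is why passing to $N^2$ (or to lengths of the form $r = c^2 + d^2$) is the clean fix. Everything else — that orthogonal integer pairs are bases, that rescaling preserves directions — is routine. So the proof is: (i) reduce to finding two nonparallel equal-length integer vectors with directions in $U$; (ii) produce many lattice points on a circle of integer radius by taking the radius to be a product of primes $\equiv 1 \bmod 4$, squared; (iii) invoke (or prove by elementary iteration of Gaussian integer multiplication) that their directions become dense as the radius grows, so at least two land in $U$ and are independent; (iv) rescale to unit vectors. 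I expect step (ii)--(iii) to be where the only real content lies, and I would cite a standard reference for lattice points on circles rather than reprove equidistribution.
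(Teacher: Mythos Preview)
Your approach is essentially correct but substantially more elaborate than necessary. The paper's proof is a two-line argument: rational points are dense on $\mathbb{S}^1$ (via the standard parametrization $t\mapsto\bigl(\tfrac{1-t^2}{1+t^2},\tfrac{2t}{1+t^2}\bigr)$), so pick two non-collinear $u_1,u_2\in U\cap\Q^2$; clearing denominators with a single integer $N$ gives $v_j=Nu_j\in\Z^2$ with $|v_j|=N|u_j|=N$ and $v_j/|v_j|=u_j\in U$. No Gaussian integers, no equidistribution, no counting representations as sums of two squares---the condition $|v_1|=|v_2|\in\Z$ falls out automatically because $u_1,u_2$ already lie on the \emph{unit} circle.

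Your route via lattice points on a fixed circle of integer radius does work, but note one soft spot: the claim that ``$r_2(N)\to\infty$ plus pigeonhole'' forces the maximal angular gap to shrink is false as stated---many points can still cluster, leaving a large gap elsewhere. Your fallback to iterated Gaussian-integer multiplication (e.g.\ powers and conjugate-powers of $3+4i$, whose argument is an irrational multiple of $\pi$) does give the needed density and repairs this, but at that point you are essentially re-deriving the density of rational points on $\mathbb{S}^1$ in a more cumbersome language. The paper's phrasing sidesteps all of the arithmetic by working directly with the rational parametrization.
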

\begin{proof}
Since rational points are dense in the unit circle, we pick two points $u_1,u_2 \in U \cap \Q^2$ with $u_1$ and $u_2$ not collinear. Since $u_1$ and $u_2$ have rational coordinates, there is $N \in \Z$ such that $Nu_1,Nu_2 \in \Z^d$. We pick $v_1 = Nu_1$ and $v_2 = Nu_2$ and claim all desired properties are satisfied. First since $u_1$ and $u_2$ were chosen not to be collinear, $v_1$ and $v_2$ are not as well. This is enough to ensure linear independence. Next we have
$$
|v_1| = N|u_1| = N = N|u_2| = |v_2|
$$
and since $N \in \Z$ the second condition is satisfied. Then finally
$$
\frac{v_1}{|v_1|} = u_1 \in U.
$$
The same argument shows $v_2/|v_2| \in U$ completing the proof.
\end{proof}

\begin{lemma}[\textbf{Non-degeneracy of multiplier in two dimensions}]\label{lem:nondegen}
    There are $k_1,k_2 \in \Z^2$ with the same norm such that $\{m(k_1) + m(-k_1), m(k_2) + m(-k_2)\}$ is a basis of $\R^2$.
\end{lemma}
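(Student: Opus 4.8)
The plan is to use the divergence-free property of $T$ to pin down the direction of the even part of the multiplier relative to $\xi$, and then invoke Lemma~\ref{lem:NT}. Set $M(\xi) := m(\xi) + m(-\xi)$, the (doubled) even part of the multiplier; since $m$ extends to a smooth function on $\R^2 \setminus \{0\}$ that is homogeneous of order $0$, so does $M$, and I write $M$ also for this extension. Because $u = T\theta$ is divergence free for every scalar field $\theta$, examining Fourier coefficients forces $\xi \cdot m(\xi) = 0$ for all $\xi \in \Z^d$; by homogeneity and smoothness (the rational directions are dense in $\mathbb{S}^1$), this persists for all $\xi \in \R^2 \setminus \{0\}$, and hence $\xi \cdot M(\xi) = \xi \cdot m(\xi) + \xi \cdot m(-\xi) = -(-\xi) \cdot m(-\xi) = 0$ as well. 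In two dimensions the vectors orthogonal to $\xi$ form the line $\R\,\xi^{\perp}$, where $\xi^{\perp} := (-\xi_2, \xi_1)$, so for every $\xi \neq 0$ the vector $M(\xi)$ is a (possibly vanishing) scalar multiple of $\xi^{\perp}$.

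Next, since $m$ is assumed not odd, $M$ is not identically zero, so there is $\eta_0 \in \mathbb{S}^1$ with $M(\eta_0) \neq 0$; by continuity of $M$ on $\R^2 \setminus \{0\}$ there is an open arc $U \subset \mathbb{S}^1$ containing $\eta_0$ on which $M$ never vanishes. Applying Lemma~\ref{lem:NT} to $U$ produces $k_1, k_2 \in \Z^2$ which form a basis of $\R^2$, have $|k_1| = |k_2| \in \Z$, and satisfy $k_i/|k_i| \in U$. By homogeneity $M(k_i) = M(k_i/|k_i|) \neq 0$ for $i=1,2$, and since $k_i, -k_i \in \Z^2$ the extension agrees with $m$ there, so $m(k_i) + m(-k_i) = M(k_i)$ is a nonzero scalar multiple of $k_i^{\perp}$ by the previous paragraph. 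Finally, because $\{k_1,k_2\}$ is a basis of $\R^2$ and the quarter-turn $\xi \mapsto \xi^{\perp}$ is a linear isomorphism, $\{k_1^{\perp}, k_2^{\perp}\}$ is also a basis; hence $\{\,m(k_1)+m(-k_1),\ m(k_2)+m(-k_2)\,\}$, being a pair of nonzero multiples of $k_1^{\perp}$ and $k_2^{\perp}$, is a basis of $\R^2$, which is the claim.

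The only genuinely nontrivial ingredient is Lemma~\ref{lem:NT}, which is already in hand; everything else is the structural observation that the divergence-free constraint collapses the apparently two-dimensional non-degeneracy question down to the one-dimensional fact that $M$ does not vanish on some arc, and that fact is immediate from $m$ being non-odd together with continuity of its smooth extension. The one point to handle with mild care is the passage from the lattice identity $\xi \cdot m(\xi) = 0$ to the statement $M(\xi) \parallel \xi^{\perp}$ at the real points used above, which is exactly where homogeneity and smoothness of the extension of $m$ are needed.
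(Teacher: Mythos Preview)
Your proof is correct and follows essentially the same route as the paper's: locate an arc on $\mathbb{S}^1$ where the even part $M(\xi)=m(\xi)+m(-\xi)$ is nonzero, invoke Lemma~\ref{lem:NT} to produce equal-norm lattice vectors $k_1,k_2$ pointing into that arc, and then use the divergence-free constraint $\xi\cdot m(\xi)=0$ to see that each $M(k_i)$ is a nonzero multiple of $k_i^{\perp}$. The only cosmetic difference is that the paper also remarks (via $m(\xi)=\overline{m(-\xi)}$) that $M(\xi)$ is real-valued, which you implicitly use when concluding the pair spans $\R^2$; given the standing hypothesis that $m$ takes real values this is automatic, but it is worth a one-line mention.
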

\begin{proof}
    First, note that from the assumption that $T$ sends $\R$ valued functions to $\R^2$ valued ones, the multiplier must satisfy
    $$
    m(\xi) = \overline{m(-\xi)}
    $$
    for all $\xi \in \Z^2$ and thus it follows that
    $$
    m(\xi) + m(-\xi) \in \R^2
    $$
    for all $\xi \in \Z^2$. Let $\tilde{m}$ denote the smooth function which agrees with $m$ away from the origin. Since $m$ is not odd, there is $\xi \in \mathbb{S}^1$ such that
    $$
    \tilde{m}(\xi) + \tilde{m}(-\xi) \not = 0.
    $$
    Then there is an open arc $U \subset \mathbb{S}^1$ which contains $\xi$ such that
    $$
    \tilde{m}(\eta) + \tilde{m}(-\eta) \not=0 
    $$
    for all $\eta \in U$. Applying Lemma \ref{lem:NT}, we may find $\{k_1,k_2\} \subset \Z^2$ such that $|k_1| = |k_2|$, $\{k_1,k_2\}$ are linearly independent, and $k_1/|k_1|, k_2/|k_2| \in U$. Thus we have that
    $$
    m(k_1) + m(-k_1) \not = 0
    $$
    and
    $$
    m(k_2) + m(-k_2) \not= 0
    $$
    using the homogeneity of $m$. Since $\{k_1,k_2\}$ is a basis of $\R^d$, $\{k_1^\perp,k_2^\perp\}$ are as well. Then finally since $T$ produces divergence free vector fields, $m(k_1) + m(-k_1)$ is a non-zero multiple of $k_1^\perp$ and $m(k_2) + m(-k_2)$ is a non-zero multiple of $k_2^\perp$. This completes the proof.
\end{proof}

\begin{lemma}[\textbf{Linear algebra}]\label{lem:lin_alg}
    If $d = 2$, let $\Omega \subset \Z^d$ be the basis constructed in Lemma \ref{lem:nondegen}. If $d \geq 3$, then we assume there is a basis $\Omega \subset \Z^d$ satisfying the conclusions of Lemma \ref{lem:nondegen} adapted to $d$ dimensions. Put
    $$
    k^* = \sum_{k \in \Omega} \left(m(k) + m(-k)\right) \not = 0.
    $$
    For $\epsilon_{\Omega} > 0$ small enough there exist smooth functions $\{\Gamma_k : \R^d \to \R\}_{k \in \Omega}$ such that for all $v \in B(k^*,\epsilon_{\Omega})$ we have
        $$
        v = \sum_{k \in \Omega} \left(\Gamma_k(v)\right)^2 \left(m(k) + m(-k)\right);
        $$
\end{lemma}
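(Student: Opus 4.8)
The plan is to prove this as a quantitative inverse function theorem applied to a smooth map built from the vectors $m(k)+m(-k)$. First I would set $e_k := m(k) + m(-k)$ for $k \in \Omega$; by Lemma \ref{lem:nondegen} (or its assumed $d$-dimensional analogue), $\{e_k\}_{k \in \Omega}$ is a basis of $\R^d$, and $k^* = \sum_{k \in \Omega} e_k \neq 0$. Consider the map $F \colon \R^{|\Omega|} \to \R^d$ defined by $F(a) = \sum_{k \in \Omega} a_k^2 \, e_k$, evaluated near the base point $a^* = (1,1,\dots,1)$, so that $F(a^*) = k^*$. The goal is to produce smooth $a_k(v)$ with $F(a(v)) = v$ for $v$ near $k^*$, and then set $\Gamma_k(v) = a_k(v)$.

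The key step is to compute the Jacobian of $F$ at $a^*$. Since $\partial_{a_j} F(a) = 2 a_j e_j$, at $a^* = (1,\dots,1)$ we get $\partial_{a_j} F(a^*) = 2 e_j$. Because $\{e_k\}_{k\in\Omega}$ is a basis of $\R^d$ and $|\Omega| = d$, the differential $DF(a^*) \colon \R^d \to \R^d$ is an isomorphism. By the inverse function theorem, there is a neighborhood $V$ of $k^*$ and a smooth map $G = (G_k)_{k \in \Omega} \colon V \to \R^{|\Omega|}$ with $F \circ G = \mathrm{id}_V$ and $G(k^*) = a^*$. Choosing $\epsilon_\Omega > 0$ small enough that $B(k^*, \epsilon_\Omega) \subset V$, we then extend each $G_k$ smoothly to all of $\R^d$ (e.g.\ multiplying by a smooth cutoff supported in $V$ and equal to $1$ on $B(k^*,\epsilon_\Omega)$), and set $\Gamma_k := G_k$ so extended. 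Then for every $v \in B(k^*,\epsilon_\Omega)$,
$$
\sum_{k \in \Omega} \bigl(\Gamma_k(v)\bigr)^2 \bigl(m(k) + m(-k)\bigr) = F(G(v)) = v,
$$
as required.

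There is one subtlety worth addressing: the statement asks for the $\Gamma_k$ to be smooth on all of $\R^d$, whereas the inverse function theorem only gives them locally near $k^*$. This is why the cutoff extension step is needed — it is harmless since the identity is only claimed on $B(k^*,\epsilon_\Omega)$. One could alternatively shrink $\epsilon_\Omega$ and take $\Gamma_k$ to be a globally bounded smooth function agreeing with $G_k$ on the ball; either way, no real analytic difficulty arises.

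The main obstacle, such as it is, is essentially bookkeeping: verifying that $DF(a^*)$ is invertible, which reduces entirely to the linear independence of $\{m(k)+m(-k)\}_{k\in\Omega}$ already established (in $d=2$) in Lemma \ref{lem:nondegen}, or assumed for $d \geq 3$. Everything else is a routine application of the inverse function theorem plus a smooth cutoff. The choice of base point $a^* = (1,\dots,1)$ is what makes the squares $a_k^2$ have nonvanishing derivative there; this is precisely why $k^*$ is defined with coefficients all equal to $1$, and is the only mildly clever point in the argument.
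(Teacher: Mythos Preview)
Your proposal is correct and follows essentially the same approach as the paper: the paper simply states that this is ``a very simple application of the inverse function theorem'' (citing analogous lemmas in \cite{BV2020}, \cite{DLS13}, \cite{IV}, \cite{DW}), and your write-up supplies exactly that argument with the natural base point $a^*=(1,\dots,1)$. Your remark about the cutoff extension to obtain globally defined smooth $\Gamma_k$ is a reasonable addition that the paper leaves implicit.
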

\begin{proof}
This is a very simple application of the inverse function theorem. See \cite[Proposition 5.6]{BV2020} and \cite[Lemma 3.2]{DLS13} for more details related to the case where the vector space under consideration is the space of three by three symmetric matrices; the proof in this case is a slight variant. The idea here is very reminiscent of what is utilized in \cite[Section 2.1]{IV} and \cite[Lemma 2.1]{DW}.
\end{proof}

One of the advantages of estimating our stress field in a Sobolev norm with negative regularity is that we never require the derivative to "land" on our high frequency building blocks. This means that we do not require that these high frequency building blocks are a stationary solution to any PDE. In \cite{GR}, this was exploited to utilize a fully intermittent building block, which typically would not be acceptable in convex integration schemes where the Reynolds stress or stress field is estimated in an $L^p$ or $C^0$ norm. In this work, we will not require a fully intermittent building block; one dimension of intermittency will suit our purposes. This means that if one wishes to use a more standard choice of high frequency building block, like an intermittent Mikado flow in \cite{ABGN}, that would be perfectly acceptable. However, as in \cite{ABGN} and \cite{GR}, our analysis will again not require the building block solve any PDE, so to demonstrate the flexibility that this method affords, we intentionally use a generic high frequency building block which we call an intermittent slab.

\begin{lemma}[\textbf{Intermittent slabs}]\label{lem:boldW}
    Let $\lambda_{q+1}$ be a large integer and take $0 < \epsilon_{q+1} < 1$ such that $\lambda_{q+1}^{\epsilon_{q+1}}$ is also an integer. For each $k \in \Omega$ from Lemma \ref{lem:lin_alg}, there exist smooth $\rho^k_{q+1}:\T^d \to \R$ such that
    \begin{enumerate}
        \item\label{w:2} $\int_{\T^d} \rho_{q+1}^k = 0$;
        \item\label{w:3} $\Vert \nabla^\alpha \rho_{q+1}^k \Vert_{L^p(\T^d)} \lesssim \lambda_{q+1}^{(1-\epsilon_{q+1})\left(\frac{1}{2}-\frac{1}{p}\right)} \lambda_{q+1}^{|\alpha|}$;
        \item\label{w:4} $\rho^k_{q+1}$ is $\left(\frac{\T}{\lambda_{q+1}^{\epsilon_{q+1}}}\right)^d$-periodic.
    \end{enumerate}
\end{lemma}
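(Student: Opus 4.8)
The plan is to build each $\rho^k_{q+1}$ from one fixed one–dimensional bump, rescaled so that it concentrates in a slab of thickness $\lambda_{q+1}^{-1}$ and then periodized to period $\lambda_{q+1}^{-\epsilon_{q+1}}$, with the slab normal to the lattice direction $k$. Fix once and for all $\Phi \in C^\infty_c(\R)$ with $\operatorname{supp}\Phi \subset (0,1)$, $\int_\R \Phi = 0$, and $\Phi \not\equiv 0$ (for instance $\Phi = \chi'$ for a bump $\chi$). Since $\lambda_{q+1}$ is a large integer and $0 < \epsilon_{q+1} < 1$, the profile $\Phi(\lambda_{q+1}\,\cdot)$ is supported in an interval of length $\lambda_{q+1}^{-1} < \lambda_{q+1}^{-\epsilon_{q+1}}$, so its $\lambda_{q+1}^{-\epsilon_{q+1}}$–periodization
$$
\Psi_{q+1}(t) := \sum_{n \in \Z} \Phi\!\left(\lambda_{q+1}\big(t - n\,\lambda_{q+1}^{-\epsilon_{q+1}}\big)\right)
$$
is a well–defined smooth $\lambda_{q+1}^{-\epsilon_{q+1}}$–periodic function of $t$; since $\lambda_{q+1}^{\epsilon_{q+1}} \in \Z$ it is in particular $1$–periodic and descends to $\T^1$. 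I would then set
$$
\rho^k_{q+1}(x) := \lambda_{q+1}^{(1-\epsilon_{q+1})/2}\,\Psi_{q+1}(k \cdot x), \qquad k \in \Omega .
$$
(One could also insert $k$–dependent offsets $\Psi_{q+1}(k\cdot x + \sigma_k)$ — this affects nothing below and can be arranged, for $\lambda_{q+1}$ large relative to the remaining parameters, so that the $\rho^k_{q+1}$ have pairwise disjoint supports, which is convenient in Section \ref{Section5}.)

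The three properties are then checked directly. For \eqref{w:4}: since $k \in \Omega \subset \Z^d$, translating $x$ by $\lambda_{q+1}^{-\epsilon_{q+1}} e_j$ changes $k\cdot x$ by $\lambda_{q+1}^{-\epsilon_{q+1}} k_j$, an integer multiple of the period $\lambda_{q+1}^{-\epsilon_{q+1}}$ of $\Psi_{q+1}$, so $\rho^k_{q+1}$ is $(\T/\lambda_{q+1}^{\epsilon_{q+1}})^d$–periodic. For \eqref{w:2}: invoking the standard identity $\int_{\T^d} F(k\cdot x)\,dx = \int_{\T^1} F$, valid for $1$–periodic $F$ and any $0 \neq k \in \Z^d$, together with the fact that $[0,1)$ contains exactly $\lambda_{q+1}^{\epsilon_{q+1}}$ periods of $\Psi_{q+1}$,
$$
\int_{\T^d} \rho^k_{q+1} = \lambda_{q+1}^{(1-\epsilon_{q+1})/2} \int_{\T^1} \Psi_{q+1} = \lambda_{q+1}^{(1-\epsilon_{q+1})/2}\,\lambda_{q+1}^{\epsilon_{q+1}}\,\lambda_{q+1}^{-1} \int_\R \Phi = 0 .
$$

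For the $L^p$ bounds \eqref{w:3}: because $\rho^k_{q+1}$ is a function of $k\cdot x$ alone, $\nabla^\alpha \rho^k_{q+1}(x) = \lambda_{q+1}^{(1-\epsilon_{q+1})/2}\,k^\alpha\,\Psi_{q+1}^{(|\alpha|)}(k\cdot x)$, and the same change of variables reduces the estimate to $\|\Psi_{q+1}^{(m)}\|_{L^p(\T^1)}$ for $m = |\alpha|$. The $\lambda_{q+1}^{\epsilon_{q+1}}$ translates making up $\Psi_{q+1}^{(m)}$ have pairwise disjoint supports, each a rescaled copy of $\Phi^{(m)}(\lambda_{q+1}\,\cdot)$, so
$$
\|\Psi_{q+1}^{(m)}\|_{L^p(\T^1)}^p = \lambda_{q+1}^{\epsilon_{q+1}}\,\lambda_{q+1}^{m p}\,\lambda_{q+1}^{-1}\,\|\Phi^{(m)}\|_{L^p(\R)}^p ,
$$
whence $\|\Psi_{q+1}^{(m)}\|_{L^p(\T^1)} \simeq \lambda_{q+1}^{m}\,\lambda_{q+1}^{-(1-\epsilon_{q+1})/p}$ (the case $p=\infty$ being identical, again by disjointness). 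Combining this with $|k^\alpha| \leq |k|^{|\alpha|}$, a fixed constant depending only on $\Omega$, yields precisely
$$
\|\nabla^\alpha \rho^k_{q+1}\|_{L^p(\T^d)} \lesssim \lambda_{q+1}^{(1-\epsilon_{q+1})\left(\frac{1}{2}-\frac{1}{p}\right)}\,\lambda_{q+1}^{|\alpha|},
$$
the implied constant depending only on $\Phi$, $|\alpha|$, and $\Omega$.

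I do not anticipate a genuine obstacle: the substance of the lemma is the bookkeeping — choosing the amplitude exponent $(1-\epsilon_{q+1})/2$ and the slab thickness $\lambda_{q+1}^{-1}$ so that all three scalings in \eqref{w:3} hold simultaneously — and invoking the two integrality hypotheses for exactly the two things they buy: $\lambda_{q+1}$ integer (hence large, so the bump fits strictly inside a single period and $\Psi_{q+1}$ is smooth), and $\lambda_{q+1}^{\epsilon_{q+1}} \in \Z$ (so that a $\lambda_{q+1}^{-\epsilon_{q+1}}$–periodic function descends to $\T^d$, which is what makes \eqref{w:4} meaningful). As the surrounding discussion stresses, nothing forces this particular building block — a fixed slab direction $e_1$ for every $k$, or an intermittent Mikado-type block, would serve equally well — so I would include the slab construction above chiefly for concreteness and to exhibit the method's flexibility.
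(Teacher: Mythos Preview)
Your construction and verification are essentially the same as the paper's: the paper sets $\rho^k_{q+1}(x) = \lambda_{q+1}^{(1-\epsilon_{q+1})/2}\sum_{n\in\Z}\phi(\lambda_{q+1}k\cdot x + \lambda_{q+1}^{1-\epsilon_{q+1}}n)$ for a fixed smooth, odd, mean-zero, compactly supported $\phi$, and obtains \eqref{w:3} by interpolating the $L^1$ and $L^\infty$ endpoint bounds rather than computing $L^p$ directly as you do. The one point worth flagging is that the paper specifically takes $\phi$ \emph{odd} so that $\hat\phi$ is purely imaginary, a fact used downstream in the oscillation-error analysis (see~\eqref{eq:mean}); your example $\Phi=\chi'$ with $\chi$ an even bump already has this property, so no adjustment is needed.
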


\begin{proof}
    Let $\phi:\R \to \R$ be smooth, odd, supported in $(-1,1)$, mean-zero, and $L^2$ normalized. Define $\rho_{q+1}^k:\R^d \to \R$ by
    \begin{equation}\label{eq:rho}
        \rho^k_{q+1}(x) = \sum_{n \in \Z} \lambda_{q+1}^{\frac{1-\epsilon_{q+1}}{2}} \phi\left(\lambda_{q+1}k \cdot x +\lambda_{q+1}^{1-\epsilon_{q+1}}n\right).
    \end{equation}
    We claim this function will have all of the desired properties. Items \ref{w:2} and \ref{w:4} are obvious, leaving \ref{w:3}. For this, our strategy will be to obtain the desired estimate for $p = 1$ and $p = \infty$, and then use interpolation to obtain the desired bound for all intermediate values of $p$. First, note that taking derivatives will cost a factor of $\lambda_{q+1}$, so without loss of generality we may assume that $|\alpha| = 0$. We start with the $L^\infty$ estimate. For this, note for $\lambda_{q+1}$ large enough, the terms inside the summation in ~\eqref{eq:rho} will be disjoint. Thus
    \begin{equation}\label{eq:rho_L^inf_est}
        \Vert \rho^k_{q+1} \Vert_{L^\infty(\T^d)} \lesssim \lambda_{q+1}^{\frac{1-\epsilon_{q+1}}{2}} \Vert \phi \Vert_{L^{\infty}(\R)} \simeq \lambda_{q+1}^{\frac{1-\epsilon_{q+1}}{2}}.
    \end{equation}
    Now for the $L^1$ estimate, each term in ~\eqref{eq:rho} is supported in a rectangle of measure $\lambda_{q+1}^{-1}$. Each of these supports is contained within a rectangle of area $\lambda_{q+1}^{-\epsilon_{q+1}}$. Hence
    $$
        \left|\operatorname{supp}\left(\rho_{q+1}^k\right) \cap [0,1]^2 \right| \simeq \frac{\lambda_{q+1}^{-1}}{\lambda_{q+1}^{-\epsilon_{q+1}}} = \lambda_{q+1}^{\epsilon_{q+1}-1}.
    $$
    Utilizing this observation as well as ~\eqref{eq:rho_L^inf_est} we have
    \begin{equation}\label{eq:rho_L1_est}
        \Vert \rho_{q+1}^k \Vert_{L^1(\T^d)} \lesssim \Vert \rho_{q+1}^k \Vert_{L^\infty(\T^d)} \left|\operatorname{supp}\left(\rho_{q+1}^k\right) \cap [0,1]^2 \right| \lesssim \lambda_{q+1}^{\frac{\epsilon_{q+1}-1}{2}}.
    \end{equation}
    Interpolating \cite{Stein} between the estimates provided by ~\eqref{eq:rho_L^inf_est} and ~\eqref{eq:rho_L1_est} gives \ref{w:3} and completes the proof.
\end{proof}

\begin{lemma}[\textbf{Fourier series representation of the intermittent slab}]\label{lem:fourier}
    The Fourier series representation of $\rho_{q+1}^{k}$ is
    \begin{equation}\label{eq:fourier_series}
        \rho_{q+1}^{k}(x) = \sum_{n \in \Z} \lambda_{q+1}^{\frac{\epsilon_{q+1}-1}{2}} \hat{\phi}\left(\lambda_{q+1}^{\epsilon_{q+1}-1}n\right) e^{2\pi i \lambda_{q+1}^{\epsilon_{q+1}}  nk \cdot x}.
    \end{equation}
\end{lemma}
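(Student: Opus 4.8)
The plan is to recognize $\rho_{q+1}^k$ as a one-dimensional periodization evaluated along the lattice direction $k$, and then to compute its Fourier coefficients directly (equivalently, to apply the Poisson summation formula). Throughout, write $\mu = \lambda_{q+1}$ and $\ep = \epsilon_{q+1}$ to lighten the notation, and set $P = \mu^{1-\ep}$.

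First I would observe that $\rho_{q+1}^k(x)$ depends on $x$ only through the scalar $t = \mu\, k\cdot x$. Indeed, defining the auxiliary one-variable function
$$
g(t) = \mu^{\frac{1-\ep}{2}}\sum_{n\in\Z}\phi\!\left(t + P n\right),
$$
we have $\rho_{q+1}^k(x) = g(\mu\, k\cdot x)$. Since $\phi$ is smooth and compactly supported in $(-1,1)$, for each fixed $t$ only finitely many terms in the sum are nonzero, so there is no convergence issue, $g$ is smooth, and $g$ is $P$-periodic.

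Next I would expand $g$ in its Fourier series on the period $[0,P]$. Unfolding the periodization against the periodic exponential (the change of variables $s = t + Pn$ in each summand, using $e^{2\pi i m n}=1$) turns the integral over one period of the sum into an integral of $\phi$ over all of $\R$, which is precisely the Poisson summation step:
$$
c_m \;=\; \frac{1}{P}\int_0^{P} g(t)\, e^{-2\pi i m t/P}\,dt \;=\; \frac{\mu^{\frac{1-\ep}{2}}}{P}\int_{\R}\phi(s)\, e^{-2\pi i m s/P}\,ds \;=\; \mu^{\frac{\ep-1}{2}}\,\hat\phi\!\left(\mu^{\ep-1} m\right),
$$
where in the last step I used $1/P = \mu^{\ep-1}$ and collected the powers $\mu^{\ep-1}\cdot\mu^{\frac{1-\ep}{2}} = \mu^{\frac{\ep-1}{2}}$. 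Because $\hat\phi$ is Schwartz, $\sum_m |c_m| < \infty$, so the Fourier series of $g$ converges absolutely and uniformly.

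Finally I would substitute $t = \mu\, k\cdot x$ back in:
$$
\rho_{q+1}^k(x) = g(\mu\, k\cdot x) = \sum_{m\in\Z} c_m\, e^{2\pi i m \mu k\cdot x / P} = \sum_{m\in\Z}\mu^{\frac{\ep-1}{2}}\,\hat\phi\!\left(\mu^{\ep-1}m\right)e^{2\pi i \mu^{\ep} m k\cdot x},
$$
since $\mu/P = \mu^{\ep}$; renaming $m$ to $n$ gives exactly \eqref{eq:fourier_series}. I would close with the remark that each frequency $\mu^{\ep} m k$ lies in $\Z^d$ — because $\mu^{\ep} = \lambda_{q+1}^{\epsilon_{q+1}}$ is assumed to be an integer and $k \in \Omega \subset \Z^d$ — so the right-hand side is a genuine Fourier series on $\T^d$, consistent with the $(\T/\lambda_{q+1}^{\epsilon_{q+1}})^d$-periodicity recorded in Lemma \ref{lem:boldW}. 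There is no real obstacle here; the only point that needs a little care is the bookkeeping of the dilation factors in the Poisson summation / change of variables, which is why I route the computation through the auxiliary one-variable function $g$ rather than working on $\T^d$ directly.
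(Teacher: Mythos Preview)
Your proof is correct and follows essentially the same approach as the paper, which simply writes ``Applying the Poisson summation formula to \eqref{eq:rho} gives \eqref{eq:fourier_series}.'' Your argument is just this Poisson summation spelled out in full via the auxiliary $P$-periodic function $g$, with the dilation bookkeeping done carefully.
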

\begin{proof}
    Applying the Poisson summation formula to ~\eqref{eq:rho} gives ~\eqref{eq:fourier_series}.
\end{proof}

\section{Convex Integration Scheme}\label{Section3}
We present here a short outline of the argument to follow. We will consider the relaxed version of Equation ~\eqref{eq:stat_eqn} given by
\begin{equation}\label{eq:relaxed}
\begin{split}
    \nabla \cdot (\theta u) + \Lambda^\gamma \theta &= \nabla \cdot R\\
    u &= T\theta
    \end{split}
\end{equation}
for $R : \T^d \to \R^d$ which following \cite{IV} we refer to as the \textit{stress field}. We assume inductively that we have a smooth classical solution $(\theta_q,u_q,R_q)$ to ~\eqref{eq:relaxed}. If we put
\begin{equation}\label{eq:thetaq+1}
    \theta_{q+1} = \theta_q + w_{q+1}
\end{equation}
and
\begin{equation}\label{eq:uq+1}
    u_{q+1} = u_q + Tw_{q+1}
\end{equation}
for some carefully chosen smooth function $w_{q+1}:\T^d \to \R$, then $R_{q+1}$ must satisfy
\begin{equation*}
    \begin{split}
        \nabla \cdot R_{q+1} &= \nabla \cdot (\theta_{q+1}u_{q+1}) + \Lambda^\gamma \theta_{q+1}\\
        &= \nabla \cdot (R_q + w_{q+1}Tw_{q+1}) + \nabla \cdot (w_{q+1} T\theta_q + \theta_q Tw_{q+1}) + \Lambda^\gamma w_{q+1}
    \end{split}
\end{equation*}
Setting
\begin{equation}\label{eq:osc_error}
    R_O = R_q + w_{q+1}Tw_{q+1},
\end{equation}
\begin{equation}\label{eq:Nash_error}
    R_N = w_{q+1}T\theta_q + \theta_qTw_{q+1}
\end{equation}
and
\begin{equation}\label{eq:dis_error}
    R_D = -\Lambda^{\gamma-2} \nabla w_{q+1}
\end{equation}
we have that
\begin{equation}\label{eq:Rq+1}
    R_{q+1} = R_O + R_N + R_D + \nabla^\perp \varphi_{q+1}
\end{equation}
for some $\varphi_{q+1}: \T^d \to \R$; and in our case, we will show it is sufficient to take $\varphi_{q+1} = 0$. Equations ~\eqref{eq:osc_error} - ~\eqref{eq:dis_error} are referred to as the \textit{oscillation error}, \textit{Nash error}, and \textit{dissipation error}, respectively. We will show that for arbitrary $\alpha < 0$ and $1 \leq p < 2$, both $\Vert w_{q+1} \Vert_{\dot{B}^\alpha_{\infty,\infty}}$ and $\Vert w_{q+1} \Vert_{L^p}$ are summable, and so $\theta \in \dot{B}^\alpha_{\infty,\infty} \cap L^p$. Since $\alpha$ and $p$ are chosen arbitrarily, we get the desired regularity of $\theta$, and since $u$ is related to $\theta$ by a Calderon-Zygmund operator which is smoothing of degree $0$, we obtain the same regularity for $u$. Then upon showing that the dissipation, Nash, and oscillation errors all tend to $0$ in $\dot{H}^{-s}$ norm as $q \to \infty$, this will show that $u$ and $\theta$ are our desired solution to ~\eqref{eq:stat_eqn} in the sense of Definition \ref{def:weak_para_soln}. We note that the existence of $\mathbb{P}_{\not=0}(\theta u)$ as a paraproduct follows nearly immediately from our computations showing that the oscillation error can be made arbitrarily small.

\section{Inductive Proposition and Proof of Theorem \ref{thm:main}}\label{Section4}

\begin{proposition}\label{prop:ind}
    We make the following inductive assumptions about $(\theta_q,u_q,R_q)$:
    \begin{enumerate}
        \item\label{i:1} $\theta_q$ has zero mean and $u_q$ is divergence free;
        \item\label{i:2} $(\theta_q,u_q,R_q)$ is a smooth solution to the relaxed active scalar equation given by Equation ~\eqref{eq:relaxed};
        \item\label{i:3} $\Vert R_q \Vert_{\dot{H}^{-s}} < 2^{-q}$;
        \item\label{i:4} There is a constant $C(\alpha,p) > 0$ depending only on $\alpha < 0$ and $1 \leq p < 2$ such that for all $q' \leq q$ we have
        $$
        \Vert \theta_{q'} - \theta_{q'-1} \Vert_{\dot{B}^\alpha_{\infty,\infty}} + \Vert \theta_{q'} - \theta_{q'-1} \Vert_{L^p} \lesssim 2^{-C(\alpha,p)q'}
        $$
        where the implicit constant depends on $\alpha$ and $p$ but not $q$, or $q'$;
        \item\label{i:5} There is $\delta > 0$ independent of $q$ such that
        $$
        \Vert \theta_q \Vert_{L^1} > (1 + 2^{-q})\delta;
        $$
        \item\label{i:6} For each $q' \leq q$ there exists a unique $j \in \Z$ such that
        $$
        \mathbb{P}_{2^j}(\theta_{q'} - \theta_{q'-1}) = \theta_{q'} - \theta_{q'-1};
        $$
        \item\label{i:7} There are $C_1,C_2 > 0$ independent of $q$ such that
        $$
        \sum_{\substack{n,m \leq q\\ n \not  = m}} \Vert (\theta_n - \theta_{n-1}) (u_m - u_{m-1}) \Vert_{\dot{H}^{-s}} < C_1 - 2^{-q}
        $$
        and
        $$
         \sum_{n \leq q} \Vert (\theta_n - \theta_{n-1}) (u_n - u_{n-1}) \Vert_{\dot{H}^{-s}} < C_2 - 2^{-q + 100}
        $$
    \end{enumerate}
\end{proposition}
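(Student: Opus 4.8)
The plan is to prove Proposition \ref{prop:ind} by induction on $q$. The base case is elementary: take $\theta_0$ to be a single, suitably rescaled low-frequency mode, $u_0=T\theta_0$, and $R_0$ the Calderón–Zygmund-type primitive of $\nabla\cdot(\theta_0u_0)+\Lambda^\gamma\theta_0$, so that (\ref{i:1})–(\ref{i:7}) hold at $q=0$ for an appropriate choice of $\delta,C_1,C_2,C(\alpha,p)$ and a small enough rescaling. The content is the inductive step: given $(\theta_q,u_q,R_q)$ satisfying (\ref{i:1})–(\ref{i:7}), construct $w_{q+1}$ so that $(\theta_{q+1},u_{q+1},R_{q+1})$ defined by \eqref{eq:thetaq+1}–\eqref{eq:Rq+1} (with $\varphi_{q+1}=0$) satisfies the same list with $q$ replaced by $q+1$. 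Everything is driven by two parameter schedules fixed once and for all: a rapidly increasing sequence of integer frequencies $\lambda_q\to\infty$ and intermittency exponents $\epsilon_q\in(0,1)$ with $\epsilon_q\to0$. Using the basis $\Omega$ from Lemma \ref{lem:lin_alg} and the intermittent slabs $\rho^k_{q+1}$ from Lemma \ref{lem:boldW} (localized to a single Littlewood–Paley shell of scale $\lambda_{q+1}$ by an application of $\mathbb{P}_{\leq\lambda}$, which will deliver (\ref{i:6})), I would set $w_{q+1}=\sum_{k\in\Omega}\gamma_{k,q+1}\,\rho^k_{q+1}$, where the smooth amplitudes are built by feeding a mollified, renormalized stress into the reconstruction functions $\Gamma_k$, schematically $\gamma_{k,q+1}=\varrho_q^{1/2}\,\Gamma_k\bigl(k^{*}-\varrho_q^{-1}\mathbb{P}_{\leq\nu_q}R_q\bigr)$, with $\nu_q$ and $\varrho_q>0$ chosen so that $\varrho_q^{-1}\mathbb{P}_{\leq\nu_q}R_q\in B(0,\epsilon_\Omega)$ — possible since $\|\mathbb{P}_{\leq\nu_q}R_q\|_{L^\infty}\lesssim\nu_q^{s+d/2}\|R_q\|_{\dot H^{-s}}$ is small.

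The cancellation is the place where non-oddness of $m$ is essential: by Lemma \ref{lem:fourier} the self-interaction $\rho^k_{q+1}T\rho^k_{q+1}$ has low-frequency "resonant" part proportional to $m(k)+m(-k)$, which vanishes for odd $m$; here it does not, so Lemma \ref{lem:lin_alg} gives that the resonant part of $w_{q+1}Tw_{q+1}$ equals $\varrho_q k^{*}-\mathbb{P}_{\leq\nu_q}R_q$ up to a controllable remainder, and hence $R_O=R_q+w_{q+1}Tw_{q+1}$, modulo the constant vector $\varrho_qk^{*}$ (invisible to $\|\cdot\|_{\dot H^{-s}}$), collapses onto the mollification tail $\mathbb{P}_{>\nu_q}R_q$ plus genuinely high-frequency, small-amplitude debris. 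I would then estimate $R_O$, the Nash error $R_N=w_{q+1}T\theta_q+\theta_qTw_{q+1}$, and the dissipation error $R_D=-\Lambda^{\gamma-2}\nabla w_{q+1}$ in $\dot H^{-s}$. The workhorse is the embedding $L^1\hookrightarrow\dot H^{-s}$ for $s>d/2$, sharpened so that the part of $g$ at frequencies $\gtrsim\mu$ has $\dot H^{-s}$-norm $\lesssim\mu^{d/2-s}\|g\|_{L^1}$, which recovers a gain at the lowest active frequency $\mu\sim\lambda_{q+1}^{\epsilon_{q+1}}$; combined with Lemma \ref{lem:proj}, Hölder, and the intermittency estimate $\|\rho^k_{q+1}\|_{L^1}\lesssim\lambda_{q+1}^{(\epsilon_{q+1}-1)/2}$ from Lemma \ref{lem:boldW}(\ref{w:3}), this makes the high-frequency part of $w_{q+1}Tw_{q+1}$ and all of $R_N$ small as $\lambda_{q+1}\to\infty$. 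For $R_D$ one further uses a Bernstein inequality for the order-$(\gamma-1)$ operator $\Lambda^{\gamma-2}\nabla$ on the band-limited $w_{q+1}$, and balancing the resulting exponent against the intermittency gain is exactly what forces $s>d/2+\max(\gamma-1,0)$. One must also verify that the tail $\mathbb{P}_{>\nu_q}R_q$ is $\dot H^{-s}$-negligible, which uses that the frequencies $\gtrsim\nu_q$ present in $R_q$ come only from stage-$q$ debris made far smaller than $2^{-q}$ at the previous step; summing against the geometric budget then propagates (\ref{i:3}). The identical computations for the new quadratic terms $w_nTw_{q+1}$, $w_{q+1}Tw_n$ ($n\le q$) and $w_{q+1}Tw_{q+1}$ give (\ref{i:7}) and simultaneously show that $\mathbb{P}_{\neq0}(\theta u)$ will be summable as a paraproduct.

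Since $w_{q+1}$ is localized to one shell at scale $\lambda_{q+1}$, one has $\|w_{q+1}\|_{\dot B^\alpha_{\infty,\infty}}\lesssim\lambda_{q+1}^\alpha\|w_{q+1}\|_{L^\infty}$ and, by Lemma \ref{lem:boldW}(\ref{w:3}), $\|w_{q+1}\|_{L^p}\lesssim\|\gamma_{\cdot,q+1}\|_{L^\infty}\,\lambda_{q+1}^{(1-\epsilon_{q+1})(\frac12-\frac1p)}$; the point of driving $\epsilon_{q+1}\to0$, while letting $\lambda_{q+1}$ grow fast enough and keeping the amplitude $\|\gamma_{\cdot,q+1}\|_{L^\infty}\sim\varrho_q^{1/2}$ summably small, is that these become summable \emph{simultaneously} for every $\alpha<0$ and $p<2$, which is (\ref{i:4}). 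For (\ref{i:5}), $\|\theta_{q+1}\|_{L^1}\ge\|\theta_q\|_{L^1}-\|w_{q+1}\|_{L^1}$ with $\|w_{q+1}\|_{L^1}\lesssim\lambda_{q+1}^{(\epsilon_{q+1}-1)/2}\to0$, so a large enough $\lambda_{q+1}$ keeps the mass above $(1+2^{-(q+1)})\delta$; smoothness, zero mean, and divergence-freeness (\ref{i:1})–(\ref{i:2}) are built in. The genuinely hard part is not any single estimate but the \emph{joint calibration} of $(\lambda_q)$, $(\epsilon_q)$, the mollification scales $\nu_q$, and the amplitude sizes $\varrho_q$: (\ref{i:4}) forces $\epsilon_q\to0$ and a tight relation between $\varrho_q$ and $\lambda_q$ (so that $\|w_q\|_{\dot B^\alpha_{\infty,\infty}}$ stays summable even for $\alpha$ near $0$), whereas every stress estimate \emph{loses its decay rate} as $\epsilon_q\to0$ and therefore demands $\lambda_q$ even larger; and one must reconcile the fact that $R_q$ is controlled only in the weak norm $\dot H^{-s}$ with the requirement from Lemma \ref{lem:lin_alg} that its mollification be \emph{pointwise} of size $<\epsilon_\Omega$, which closes only because the discarded high-frequency tail of $R_q$ is $\dot H^{-s}$-negligible. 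Producing a single schedule for which (\ref{i:3})–(\ref{i:7}) all hold at stage $q+1$ is the crux of the argument.
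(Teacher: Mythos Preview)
Your outline has two structural gaps that prevent the argument from closing.

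\textbf{The intermittency exponent goes the wrong way.} You take $\epsilon_q\to 0$, but in the notation of Lemma~\ref{lem:boldW} the degree of intermittency is $1-\epsilon_q$, and the whole point of the paper is to send this to zero, i.e.\ $\epsilon_q\to 1$ (the paper takes $\epsilon_q=1-2^{-q-1000}$). With your choice $\epsilon_q\to 0$ the slab satisfies $\|\rho^k_{q+1}\|_{L^\infty}\lesssim\lambda_{q+1}^{(1-\epsilon_{q+1})/2}\to\lambda_{q+1}^{1/2}$, so even granting single-shell localization your Bernstein estimate gives $\|w_{q+1}\|_{\dot B^\alpha_{\infty,\infty}}\lesssim\varrho_q^{1/2}\lambda_{q+1}^{\alpha+1/2}$, which is summable for every $\alpha<0$ only if $\varrho_q\lesssim\lambda_{q+1}^{-1}$. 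But $\varrho_q$ must dominate $\|\mathbb P_{\leq\nu_q}R_q\|_{L^\infty}$, a quantity fixed before $\lambda_{q+1}$ is chosen, and your tail argument (``the frequencies $\gtrsim\nu_q$ present in $R_q$ come only from stage-$q$ debris made far smaller than $2^{-q}$'') is not implied by the stated hypotheses: \ref{i:3} controls only $\|R_q\|_{\dot H^{-s}}$, not its frequency distribution. The paper avoids this entirely by sending $\epsilon_q\to 1$, so that $\|w_{q+1}\|_{L^\infty}$ carries no growing power of $\lambda_{q+1}$; the amplitudes are simply $a_{k,q}\sim\|R_q\|_{L^\infty}^{1/2}$ (finite by smoothness, no smallness needed) and every $q$-dependent constant is beaten by choosing $\lambda_{q+1}$ large at step $q+1$.

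\textbf{The high-frequency carrier is missing.} You set $w_{q+1}=\sum_{k\in\Omega}\gamma_{k,q+1}\rho^k_{q+1}$ and claim that applying $\mathbb P_{\leq\lambda}$ ``localizes to a single Littlewood--Paley shell of scale $\lambda_{q+1}$.'' It does not: $\mathbb P_{\leq\lambda}$ is a low-pass filter, and by Lemma~\ref{lem:fourier} the active frequencies of $\rho^k_{q+1}$ are $\lambda_{q+1}^{\epsilon_{q+1}}nk$, $n\in\Z$, which range from $\lambda_{q+1}^{\epsilon_{q+1}}$ up to $\lambda_{q+1}$ after the cutoff. With your $\epsilon_{q+1}\to 0$ the lowest frequency is $O(1)$, so \ref{i:6} fails outright, your Bernstein step $\|w_{q+1}\|_{\dot B^\alpha_{\infty,\infty}}\lesssim\lambda_{q+1}^\alpha\|w_{q+1}\|_{L^\infty}$ is unjustified, and your ``gain at the lowest active frequency $\mu\sim\lambda_{q+1}^{\epsilon_{q+1}}$'' for the $\dot H^{-s}$ stress estimates evaporates. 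The paper fixes this by modulating with $\cos(2\pi\sigma_{q+1}k\cdot x)$, $\sigma_{q+1}=c\lambda_{q+1}$: the perturbation is
\[
w_{q+1}=2\sum_{k\in\Omega}\mathbb P_{\leq\lambda_{q+1}}\!\bigl(a_{k,q}\rho^k_{q+1}\bigr)\cos(2\pi\sigma_{q+1}k\cdot x),
\]
which genuinely lives in the single shell $|\xi|\simeq\lambda_{q+1}$ (this is what the constraints~\eqref{eq:r_constraint}--\eqref{eq:c_constraint} on $r,c$ are for). The oscillation-error cancellation then comes not from the mean of $\rho^kT\rho^k$ but from the cross terms $w^+_{q+1,k}Tw^-_{q+1,k}+w^-_{q+1,k}Tw^+_{q+1,k}$ after splitting the cosine into exponentials; homogeneity of $m$ produces the factor $m(k)+m(-k)$ at the level of the bilinear multiplier~\eqref{eq:Mq+1k}, and Lemma~\ref{lem:lin_alg} is applied there.
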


\begin{proof}[Proof of Theorem~\ref{thm:main} using Proposition~\ref{prop:ind}]
We start first by verifying the base case. For some $A > 0$ put $\theta_0 = A\cos(2\pi x_1)$, $u_0 = AT(\cos(2\pi x_1))$, and
$$
R_0 = \theta_0 u_0 + \Lambda^{\gamma - 2} \nabla \theta_0.
$$
Items \ref{i:1}, \ref{i:2}, and \ref{i:6} (taking $\theta_{-1} = 0$) are obvious. Taking $A > 0$ small enough ensures items \ref{i:3} and \ref{i:4}. Taking $\delta = A/2$ gives item \ref{i:5}. Choosing $C_1$ and $C_2$ large enough gives item \ref{i:7}.

Now assume all items hold for all $q$. Items \ref{i:1} and \ref{i:4} posit that $\{\theta_q\}$ is a Cauchy sequence in both $\dot{B}^\alpha_{\infty,\infty}(\T^d)$ and $L^p(\T^d)$ for all $\alpha < 0$ and all $p < 2$, and thus has a limit $\theta \in \dot{B}^\alpha_{\infty,\infty}(\T^d) \cap L^p(\T^d)$ for all $\alpha < 0$ and $p < 2$. This gives
$$
\theta \in \bigcap_{0 < \epsilon < 1} \dot{B}^{-\epsilon}_{\infty,\infty}(\T^d) \cap L^{2-\epsilon}(\T^d).
$$
From standard boundedness properties of Calderon-Zygmund operators we conclude $u$ lies in the same space as $\theta$.

From \ref{i:2}, we have that
$$
\int_{\T^d} \nabla \cdot (\theta_q u_q) \psi + \int_{\T^d} \psi \Lambda^\gamma \theta_q = \int_{\T^d} \psi \nabla \cdot R_q.
$$
Applying integration by parts and the self-adjointness of the fractional Laplacian gives
\begin{equation}\label{eq:integral_form}
    -\int_{\T^d} \mathbb{P}_{= 0}(\theta_q u_q) \cdot \nabla \psi + \int_{\T^d} \theta_q \Lambda^\gamma \psi = - \int_{\T^d} R_q \cdot \nabla \psi.
\end{equation}
We want to show that as $q \to \infty$ we obtain a solution in the sense of Definition \ref{def:weak_para_soln}. We start by analyzing the right hand side of ~\eqref{eq:integral_form}. We have
\begin{equation}\label{eq:stress_est}
    \left|\int_{\T^d} R_q \cdot \nabla \psi\right| = \left|\left\langle R_q, \nabla \psi\right\rangle_{\dot{H}^{-s},\dot{H}^s}\right| \lesssim \Vert R_q \Vert_{\dot{H}^{-s}} \Vert \nabla \psi \Vert_{\dot{H}^s} \lesssim 2^{-q} \Vert \nabla \psi \Vert_{\dot{H}^s} \to 0
\end{equation}
as we send $q \to \infty$. Now for the left hand side of Equation ~\eqref{eq:integral_form}, since $\theta_q \to \theta$ in $L^1$ we have
\begin{equation}\label{eq:L1_conv}
    \int_{\T^d} \theta_q \Lambda^\gamma \psi \to \int_{\T^d} \theta \Lambda^\gamma \psi = \langle \theta, \Lambda^\gamma \psi\rangle_{\dot{H}^{-\epsilon},\dot{H}^{\epsilon}}.
\end{equation}
Items \ref{i:6} and \ref{i:7} give that $\mathbb{P}_{\not=0}(\theta u)$ is well defined as a paraproduct in $\dot{H}^{-s}$. Hence
\begin{equation}\label{eq:nonlin_term}
    \begin{split}
        \int_{\T^d} \mathbb{P}_{= 0}(\theta_q u_q) \cdot \nabla \psi &= \int_{\T^d} \nabla \psi \cdot \sum_{\substack{n,m \leq q\\ j,j' \geq 0}} \mathbb{P}_{\not = 0}\left(\mathbb{P}_{2^j}(\theta_n - \theta_{n-1}) \mathbb{P}_{2^{j'}}(u_{m} - u_{m-1})\right)\\
        &= \left\langle \sum_{\substack{n,m \leq q\\ j,j' \geq 0}} \mathbb{P}_{\not = 0}\left(\mathbb{P}_{2^j}(\theta_n - \theta_{n-1}) \mathbb{P}_{2^{j'}}(u_{m} - u_{m-1})\right), \nabla \psi\right\rangle_{\dot{H}^{-s},\dot{H}^s}\\
        &\to \langle \mathbb{P}_{\not=0}(\theta u), \nabla \psi\rangle_{\dot{H}^{-s},\dot{H}^s}
    \end{split}
\end{equation}
Notice the third equality in ~\eqref{eq:nonlin_term} is defined this way using Definition \ref{def:paras}. Hence combining Equations ~\eqref{eq:stress_est}, ~\eqref{eq:L1_conv}, and ~\eqref{eq:nonlin_term} shows $\theta$ and $u$ is a weak solution to ~\eqref{eq:stat_eqn} in the sense of Definition \ref{def:weak_para_soln}; it remains to show that it is nontrivial. And for this, from item \ref{i:5} we see immediately that $\theta \not =0$. If $u = 0$, then we would have
$$
\int_{\T^d} \theta \Lambda^\gamma \psi = 0
$$
for all $\psi$. This forces $\theta = 0$ which is a contradiction. So $u \not =0$, and thus $\theta$ and $u$ are our desired solution.

\end{proof}

\section{Proof of Proposition~\ref{prop:ind}}\label{Section5}

\subsection{Parameters}\label{sec:param}

Here we specify the parameters that we will utilize throughout this section. First define the sequence $\{\epsilon_q\}_{q \in \N}$ by
$$
\epsilon_q = 1 - 2^{-q-1000}.
$$
Next we assume that $\{\lambda_q\}_{q \in \N}$ is a monotonically increasing sequence of very large powers of $2$ such that $\lambda_q^{\epsilon_q}$ is also an integer for all $q$. The size of $\lambda_{q+1}$ will be chosen to ensure that \eqref{eq:sigma}, \eqref{eq:diss_final_est}, \eqref{eq:R_N1_est}, \eqref{eq:R_N2_est}, \eqref{eq:Reynolds_cancel}, \eqref{eq:Bq+1k24_est}, \eqref{eq:bq+1k1_est}, \eqref{eq:high_freq_est_1}, \eqref{eq:diff_dir_est1}, \eqref{eq:L^p_est}, \eqref{eq:Besov_est_2}, \eqref{eq:delta_est}, \eqref{eq:para_est_1} and \eqref{eq:para_est_2} are all satisfied. Next we choose $0 < r \ll 1$ a power of $2$ such that
\begin{equation}\label{eq:r_constraint}
    r \leq \frac{5}{14}|k|^{-1},
\end{equation}
where $k \in \Omega$ from Lemma \ref{lem:lin_alg}, and choose $c$ such that
\begin{equation}\label{eq:c_constraint}
    |k|^{-1} + r \leq c \leq \frac{12}{7}|k|^{-1} - r
\end{equation}
and is of the form $c = (2a+1)/2^b$ for some $a,b \in \N$ such that ~\eqref{eq:c_constraint} is satisfied. Finally define $\{\sigma_q\}_{q \in \N}$ by
\begin{equation}\label{eq:sigma}
    \sigma_q = c\lambda_q \in \Z.
\end{equation}
Note we can ensure $\sigma_q \in \Z$ by requiring that $\lambda_q$ is a large enough power of $2$.

\subsection{Construction of $w_{q+1}$ and $L^p$ bounds}

\begin{definition}[\textbf{Potential Increment}]
    For $\rho_{q+1}^k$ from Lemma \ref{lem:boldW} put
    \begin{equation}\label{eq:wq+1}
        w_{q+1} = 2\sum_{k \in \Omega} \mathbb{P}_{\leq \lambda_{q+1}} \left(a_{k,q}(x) \rho^k_{q+1}(x)\right) \cos(2\pi \sigma_{q+1} k \cdot x)
    \end{equation}
    where for $\Gamma_k$ and $\epsilon_\Omega$ from Lemma \ref{lem:lin_alg} we put
    \begin{equation}
        a_{k,q} = \left(C_{q,k}'\frac{\Vert R_q\Vert_{L^\infty}}{\epsilon_\Omega}\right)^{-1/2}\Gamma_k\left(k^* - \epsilon_\Omega \frac{R_q}{\Vert R_q \Vert_{L^\infty}}\right).
    \end{equation}
    and
    $$
    C_{q,k}' = \int_{\R} \left|\hat{\phi}(x)\right|^2 \left|\hat{K}_{\simeq 1}(xk)\right|^2\, dx\footnote{Note that $C'_{q,k} \not = 0$ since $\phi$ is smooth and of compact support, and thus $\hat{\phi}$ is analytic, and hence $\hat{\phi}$ cannot vanish identically on the support of $\hat{K}_{\simeq 1}$. It is also clearly finite since this is the integral of a smooth function with compact support.}
    $$
    Also put
    \begin{equation}\label{eq:wq+1k}
        w_{q+1,k} = 2\mathbb{P}_{\leq \lambda_{q+1}} \left(a_{k,q}(x) \rho^k_{q+1}(x)\right) \cos(2\pi \sigma_{q+1} k \cdot x)
    \end{equation}
    so that
    $$
    w_{q+1} = \sum_{k \in \Omega} w_{q+1,k}
    $$
\end{definition}

\begin{lemma}[\textbf{$L^p$ bounds of $w_{q+1}$}]
    For $1 \leq p \leq \infty$ we have
    \begin{equation}\label{eq:wq+1k_est}
         \Vert w_{q+1,k} \Vert_{L^p} \lesssim \lambda_{q+1}^{(1-\epsilon_{q+1})\left(\frac{1}{2} - \frac{1}{p}\right)}
    \end{equation}
    and
    \begin{equation}\label{eq:wq+1_est}
        \Vert w_{q+1} \Vert_{L^p} \lesssim \lambda_{q+1}^{(1-\epsilon_{q+1})\left(\frac{1}{2} - \frac{1}{p}\right)}.
    \end{equation}
\end{lemma}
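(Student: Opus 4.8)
The plan is to reduce the vector estimate \eqref{eq:wq+1_est} to the single-mode estimate \eqref{eq:wq+1k_est} by the triangle inequality over the finite set $\Omega$, and to prove \eqref{eq:wq+1k_est} by peeling off, one at a time, the harmless factors in \eqref{eq:wq+1k} until only $\rho_{q+1}^k$ remains, to which item \ref{w:3} of Lemma \ref{lem:boldW} applies directly with $\alpha = 0$.

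First I would record a uniform bound on the amplitude $a_{k,q}$. The point $k^* - \epsilon_\Omega R_q(x)/\Vert R_q\Vert_{L^\infty}$ lies in the closed ball $\overline{B(k^*,\epsilon_\Omega)}$ for every $x$, since $\Vert R_q/\Vert R_q\Vert_{L^\infty}\Vert_{L^\infty} \leq 1$; here I use that $R_q \not\equiv 0$, which may be assumed throughout (otherwise $(\theta_q,u_q)$ already solves \eqref{eq:stat_eqn} and there is nothing to do). Because $\Gamma_k$ from Lemma \ref{lem:lin_alg} is smooth on a neighbourhood of this compact ball, $\Vert\Gamma_k(k^* - \epsilon_\Omega R_q/\Vert R_q\Vert_{L^\infty})\Vert_{L^\infty} \lesssim 1$ with a constant depending only on $\Omega$ and $m$; multiplying by the fixed, finite, nonzero prefactor $(C_{q,k}'\Vert R_q\Vert_{L^\infty}/\epsilon_\Omega)^{-1/2}$ (finiteness and nonvanishing of $C_{q,k}'$ being exactly the content of the footnote in the definition of $w_{q+1}$) gives $\Vert a_{k,q}\Vert_{L^\infty} \lesssim \Vert R_q\Vert_{L^\infty}^{-1/2}$. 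I would stress that this constant is permitted to depend on $q$, through $\Vert R_q\Vert_{L^\infty}$ and $C_{q,k}'$, which is harmless in this scheme since $\lambda_{q+1}$ is chosen only after $(\theta_q,u_q,R_q)$ is fixed, so any $q$-dependent factor is absorbed into the smallness requirements on $\lambda_{q+1}^{-1}$ collected in Section \ref{sec:param}.

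Next, for the single block I would estimate
$$
\Vert w_{q+1,k}\Vert_{L^p} \leq 2\Vert \mathbb{P}_{\leq \lambda_{q+1}}(a_{k,q}\rho_{q+1}^k)\Vert_{L^p} \lesssim \Vert a_{k,q}\rho_{q+1}^k\Vert_{L^p} \leq \Vert a_{k,q}\Vert_{L^\infty}\Vert\rho_{q+1}^k\Vert_{L^p},
$$
using $|\cos(2\pi\sigma_{q+1}k\cdot x)|\leq 1$ pointwise in the first step and the $\lambda$-uniform $L^p$ boundedness of $\mathbb{P}_{\leq\lambda_{q+1}}$ from Lemma \ref{lem:proj} in the second. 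Item \ref{w:3} of Lemma \ref{lem:boldW} with $\alpha=0$ gives $\Vert\rho_{q+1}^k\Vert_{L^p}\lesssim\lambda_{q+1}^{(1-\epsilon_{q+1})(1/2-1/p)}$, and combined with the amplitude bound above this is precisely \eqref{eq:wq+1k_est}. Summing over the finitely many $k\in\Omega$ and using the triangle inequality in $L^p$ then yields \eqref{eq:wq+1_est}.

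There is no genuine obstacle here; the only points demanding attention are (i) checking that $\Gamma_k$ is evaluated inside its domain of definition, which is where the normalization by $\Vert R_q\Vert_{L^\infty}$ in the definition of $a_{k,q}$ is essential, and (ii) being upfront that the suppressed constants may depend on $q$. All the analytic content is Lemma \ref{lem:proj}, H\"older's inequality, and Lemma \ref{lem:boldW}.
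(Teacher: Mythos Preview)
Your argument is correct and matches the paper's proof essentially step for step: bound the cosine by $1$, apply Lemma~\ref{lem:proj} to remove $\mathbb{P}_{\leq\lambda_{q+1}}$, pull out $\Vert a_{k,q}\Vert_{L^\infty}$, and invoke item~\ref{w:3} of Lemma~\ref{lem:boldW}; then sum over $\Omega$. Your additional care in justifying the $L^\infty$ bound on $a_{k,q}$ and flagging the $q$-dependence of the implicit constants is exactly what the paper relegates to its ``Note on Notation'' paragraph immediately following the lemma.
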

\begin{proof}
    We have using Lemmas \ref{lem:proj} and \ref{lem:boldW} that
    $$
    \Vert w_{q+1,k} \Vert_{L^p} \lesssim \Vert a_{k,q} \Vert_{L^\infty} \Vert \rho^k_{q+1}\Vert_{L^p} \lesssim \lambda_{q+1}^{(1-\epsilon_{q+1})\left(\frac{1}{2} - \frac{1}{p}\right)}
    $$
    which gives ~\eqref{eq:wq+1k_est}. ~\eqref{eq:wq+1_est} is immediate.
\end{proof}

\subsection{Note on Notation} As was already observed in the proof of equations ~\eqref{eq:wq+1k_est} and ~\eqref{eq:wq+1_est}, any constant that does not depend on $\lambda_{q+1}$ can be absorbed into the implicit constant present in all inequalities involving the $\lesssim$ symbol. Accordingly in what follows, all universal positive constants and all norms of $a_{k,q}$, $\theta_{q'}$, or $w_{q'}$ for $q' < q+1$ in the proceeding analysis will be treated in this manner.

\subsection{Proof of Item \ref{i:1}}
Suppose $\theta_q$ has zero mean. From ~\eqref{eq:r_constraint}, ~\eqref{eq:c_constraint}, and ~\eqref{eq:wq+1} it is clear that the frequency support of $w_{q+1}$ is located in an annulus with radius comparable to $\lambda_{q+1} \gg 0$. Hence $w_{q+1}$ has zero mean, and thus so does $\theta_{q+1} = \theta_q + w_{q+1}$. $u_{q+1}$ being divergence free is obvious.

\subsection{Proof of Item \ref{i:2}}
The fact $(\theta_{q+1},u_{q+1},R_{q+1})$ is a solution is obvious from our choices in ~\eqref{eq:thetaq+1}, ~\eqref{eq:uq+1}, and ~\eqref{eq:Rq+1} and our inductive assumption that $(\theta_q,u_q,R_q)$ is a solution. Smoothness of all involved terms follows from our inductive assumption as well as the smoothness of $w_{q+1}$ and the fact that $T$ applied to a smooth function is smooth.

\subsection{Proof of Item \ref{i:3}}\label{section:osc}
We assume that $\Vert R_q \Vert_{\dot{H}^{-s}} < 2^{-q}$, we aim to show that $\Vert R_{q+1} \Vert_{\dot{H}^{-s}} < 2^{-q-1}$. We will show this can be achieved by taking $\lambda_{q+1}$ sufficiently large.

\noindent\texttt{Dissipation error: } Using ~\eqref{eq:dis_error} we have
\begin{equation}\label{eq:dis_est}
    \Vert R_D \Vert_{\dot{H}^{-s}}^2 = \sum_{\xi \not =0} |\xi|^{-2s} |\xi|^{2\gamma-4} |\xi \hat{w}_{q+1}(\xi)|^2 \lesssim \Vert w_{q+1} \Vert_{L^1}^2 \sum_{\xi \not =0} |\xi|^{-2s+2\gamma-2}.
\end{equation}
Since we have chosen $s > \gamma + d/2- 1$, the sum in ~\eqref{eq:dis_est} converges, and hence applying ~\eqref{eq:wq+1_est} gives
\begin{equation}\label{eq:diss_final_est}
    \Vert R_D \Vert_{\dot{H}^{-s}} \lesssim \lambda_{q+1}^{\frac{\epsilon_{q+1}-1}{2}} < 2^{-2q-100}
\end{equation}
which holds for $\lambda_{q+1}$ chosen large enough.

\noindent\texttt{Nash error: } Put
\begin{equation*}
    R_{N,1} = w_{q+1}T\theta_q
\end{equation*}
and
\begin{equation*}
    R_{N,2} = \theta_q Tw_{q+1}.
\end{equation*}
From ~\eqref{eq:Nash_error} we have that $R_N = R_{N,1} + R_{N,2}$, and so we estimate each term separately. We have that
\begin{equation}\label{eq:R_N1_est}
    \Vert R_{N,1} \Vert_{\dot{H}^{-s}} \lesssim \Vert w_{q+1} T\theta_q \Vert_{L^1} \lesssim \Vert w_{q+1} \Vert_{L^{3/2}} \Vert T\theta_q \Vert_{L^3} \lesssim \lambda_{q+1}^{\frac{\epsilon_{q+1}-1}{6}} < 2^{-2q-101}
\end{equation}
for $\lambda_{q+1}$ chosen large enough and
\begin{equation}\label{eq:R_N2_est}
    \Vert R_{N,2} \Vert_{\dot{H}^{-s}} \lesssim \Vert \theta_q Tw_{q+1} \Vert_{L^1} \lesssim \Vert \theta_q \Vert_{L^3} \Vert w_{q+1} \Vert_{L^{3/2}} \lesssim \lambda_{q+1}^{\frac{\epsilon_{q+1}-1}{6}} < 2^{-2q-101}
\end{equation}
where we have invoked the $L^{3/2} \to L^{3/2}$ boundedness of $T$. Thus combining ~\eqref{eq:Nash_error}, ~\eqref{eq:R_N1_est}, and ~\eqref{eq:R_N2_est} gives
\begin{equation}\label{eq:Nash_final_est}
    \Vert R_N \Vert_{\dot{H}^{-s}} < (2)2^{-2q-101} = 2^{-2q-100}.
\end{equation}

\noindent\texttt{Oscillation error: } 

First let us write
\begin{equation}\label{eq:wTw_decomp_1}
    \begin{split}
        w_{q+1}Tw_{q+1} &= \left(\sum_{k \in \Omega} w_{q+1,k}\right)\left(\sum_{k\in \Omega} Tw_{q+1,k}\right)\\
        &= \sum_{k \in \Omega} w_{q+1,k}Tw_{q+1,k} + \sum_{k\not =k'} w_{q+1,k}Tw_{q+1,k'}.
    \end{split}
\end{equation}
The first summation on the second line contains terms that essentially have overlapping frequency support, and thus can have frequency support very near to the origin. This summation we refer to as the low frequency terms. Through geometric considerations, one can see the second summation has frequency support very far from the origin, and thus we refer to it as the high frequency component. The high frequency component will have very good estimates in $\dot{H}^{-s}$, whereas for the low frequency component this is where we expect to be able to cancel the stress field at level $q$. The remaining terms will then be of high frequency, and have good estimates in $\dot{H}^{-s}$.

We start with the low frequency component. Let us put
$$
w_{q+1,k}^+ = \mathbb{P}_{\leq \lambda_{q+1}} \left(a_{k,q}(x) \rho^k_{q+1}(x)\right) e^{2\pi i \sigma_{q+1} k \cdot x}
$$
and
$$
w_{q+1,k}^- = \mathbb{P}_{\leq \lambda_{q+1}} \left(a_{k,q}(x) \rho^k_{q+1}(x)\right) e^{-2\pi i \sigma_{q+1} k \cdot x}
$$
so that
$$
w_{q+1,k} = w_{q+1,k}^+ + w_{q+1,k}^-.
$$
So we have the following further decomposition of the low frequency component:
\begin{equation}\label{eq:wTw_decomp}
    w_{q+1,k} Tw_{q+1,k} = w_{q+1,k}^+Tw_{q+1,k}^+ + w_{q+1,k}^-Tw_{q+1,k}^+ + w_{q+1,k}^+Tw_{q+1,k}^- + w_{q+1,k}^-Tw_{q+1,k}^-.
\end{equation}
Since the frequency support of $w_{q+1,k}^+$ and $w_{q+1,k}^-$ does not become larger upon applying $T$, morally speaking we expect the first and last terms to be of high frequency, and thus have good decay estimates as we send $\lambda_{q+1} \to \infty$. However, the middle two terms we expect to have low frequency terms which we hope to utilize to cancel the stress field. This is the task we begin with. So we start by expanding
\begin{equation*}
    w_{q+1,k}^+ = \int_{\R^d} \hat{K}_{\simeq 1}\left(\frac{\xi - \sigma_{q+1}k}{\lambda_{q+1}}\right) \left(a_{k,q} \rho^k_{q+1}\right)^{\wedge}(\xi - \sigma_{q+1}k) e^{2\pi i \xi \cdot x}\, d\xi\footnote{Throughout this section as was done in \cite{BSV} and \cite{GR} we will shamelessly abuse notation and conflate the Fourier series and Fourier transform by identifying functions defined on $\T^d$ with their periodic extensions to $\R^d$. This will aid in the ease of the computations, but in practice there is no real harm; see \cite{CZ54} for a general transference principle}
\end{equation*}
and
\begin{equation*}
    Tw_{q+1,k}^- = \int_{\R^d} m(\eta)\hat{K}_{\simeq 1}\left(\frac{\eta + \sigma_{q+1}k}{\lambda_{q+1}}\right) \left(a_{k,q} \rho^k_{q+1}\right)^{\wedge}(\eta + \sigma_{q+1}k) e^{2\pi i \eta \cdot x}\, d\eta
\end{equation*}
to give
\begin{equation}\label{eq:bilin_exp^+-}
    w_{q+1,k}^+ Tw_{q+1,k}^- = \int_{\R^d} \int_{\R^d} M^{+,-}_{q+1,k}(\xi,\eta) \left(a_{k,q} \rho^k_{q+1}\right)^{\wedge}(\xi) \left(a_{k,q} \rho^k_{q+1}\right)^{\wedge}(\eta) e^{2\pi i (\xi + \eta) \cdot x}\, d\xi\, d\eta
\end{equation}
where
\begin{equation}\label{eq:M^+-}
     M_{q+1,k}^{+,-}(\xi,\eta) = m(\eta - \sigma_{q+1}k) \hat{K}_{\simeq 1}\left(\frac{\xi}{\lambda_{q+1}}\right) \hat{K}_{\simeq 1}\left(\frac{\eta}{\lambda_{q+1}}\right).
\end{equation}
Similarly to the procedure used to obtain ~\eqref{eq:bilin_exp^+-} and ~\eqref{eq:M^+-} we have 
\begin{equation}\label{eq:bilin_exp^-+}
    w_{q+1,k}^- Tw_{q+1,k}^+ = \int_{\R^d} \int_{\R^d} M^{-,+}_{q+1,k}(\xi,\eta) \left(a_{k,q} \rho^k_{q+1}\right)^{\wedge}(\xi) \left(a_{k,q} \rho^k_{q+1}\right)^{\wedge}(\eta) e^{2\pi i (\xi + \eta) \cdot x}\, d\xi\, d\eta
\end{equation}
and
\begin{equation}\label{eq:M^-+}
     M_{q+1,k}^{-,+}(\xi,\eta) = m(\eta + \sigma_{q+1}k) \hat{K}_{\simeq 1}\left(\frac{\xi}{\lambda_{q+1}}\right) \hat{K}_{\simeq 1}\left(\frac{\eta}{\lambda_{q+1}}\right).
\end{equation}
Thus combining ~\eqref{eq:bilin_exp^+-}, ~\eqref{eq:M^+-}, ~\eqref{eq:bilin_exp^-+}, and ~\eqref{eq:M^-+} we obtain that
\begin{equation}\label{eq:bilin_exp}
\begin{split}
    B_{q+1,k} &:= w_{q+1,k}^+ Tw_{q+1,k}^- + w_{q+1,k}^- Tw_{q+1,k}^+\\
    &= \int_{\R^d} \int_{\R^d} M_{q+1,k}(\xi,\eta) \left(a_{k,q} \rho^k_{q+1}\right)^{\wedge}(\xi) \left(a_{k,q} \rho^k_{q+1}\right)^{\wedge}(\eta) e^{2\pi i (\xi + \eta) \cdot x}\, d\xi\, d\eta
    \end{split}
\end{equation}
where
\begin{equation}\label{eq:Mq+1k}
    M_{q+1,k}(\xi,\eta) = \left[m\left(\eta - \sigma_{q+1}k\right) + m\left(\eta + \sigma_{q+1}k\right)\right] \hat{K}_{\simeq 1}\left(\frac{\xi}{\lambda_{q+1}}\right) \hat{K}_{\simeq 1}\left(\frac{\eta}{\lambda_{q+1}}\right).
\end{equation}
Put
\begin{equation}\label{eq:Mq+1^*}
    M^*_k(\xi,\eta) = \left[m\left(\eta - ck\right) + m\left(\eta + ck\right)\right] \hat{K}_{\simeq 1}\left(\xi\right) \hat{K}_{\simeq 1}\left(\eta\right).
\end{equation}
Clearly for $|\eta| < r$ we see using ~\eqref{eq:c_constraint} that
$$
|\eta \pm ck| \geq c|k| - |\eta| \geq |k|\left(|k|^{-1} + r\right) - r = 1 + \left(|k| - 1\right)r \geq 1.
$$
Thus $M_k^*$ is smooth since the support of $\hat{K}_{\simeq 1}$ forces $\eta - ck$ and $\eta + ck$ to avoid the singularity of the multiplier $m$\footnote{It is only at this point where we utilize the fact that $m$ is smooth away from the origin. Strictly speaking, this assumption can be weakened to merely assuming $m$ is smooth in a neighborhood of the vectors $\pm k$, $k \in \Omega$ and then taking $r$ small enough.\label{footnote}}. Now we note using the homogeneity of $m$ that
$$
M_{q+1,k}(\xi,\eta) = M_k^*\left(\frac{\xi}{\lambda_{q+1}},\frac{\eta}{\lambda_{q+1}}\right).
$$
Put
\begin{equation}\label{eq:kernel}
    K_{q+1,k}(y,z) = \lambda_{q+1}^{2d} \left(M_k^*\right)^{\vee}(\lambda_{q+1}y,\lambda_{q+1}z).
\end{equation}
Thus for multi-indices $\alpha,\beta$ we have that
\begin{equation}\label{eq:kernel_est}
    \Vert y^\alpha z^\beta K_{q+1,k}(y,z) \Vert_{L^1} \lesssim \lambda_{q+1}^{-|\alpha|-|\beta|}
\end{equation}
which follows from a change of variables. Now from ~\eqref{eq:bilin_exp} and ~\eqref{eq:kernel} we may write
\begin{equation}\label{eq:conv_exp}
    B_{q+1,k} = \int_{\R^d} \int_{\R^d} K_{q+1,k}(y,z) \rho_{q+1}^k(x-y) \rho_{q+1}^k(x-z) a_{k,q}(x-y) a_{k,q}(x-z)\, dy\, dz.
\end{equation}
To achieve the desired cancellation of the stress field, we decompose $\rho_{q+1}^k(x-y) \rho_{q+1}^k(x-z)$ into its mean and projection off the mean. To do this, utilizing Lemma \ref{lem:fourier} we first write
$$
\rho_{q+1}^k(x-y) \rho_{q+1}^k(x-z) = \sum_{n_1,n_2 \in \Z} \lambda_{q+1}^{\epsilon_{q+1}-1} \hat{\phi}\left(\lambda_{q+1}^{\epsilon_{q+1}-1}n_1\right) \hat{\phi}\left(\lambda_{q+1}^{\epsilon_{q+1}-1}n_2\right) e^{2\pi i \lambda_{q+1}^{\epsilon_{q+1}} \left(n_1k \cdot (x-y) + n_2k \cdot (x-z)\right)}.
$$
Clearly then
\begin{equation}\label{eq:mean_prelim}
    \mathbb{P}_{=0,x}\left(\rho_{q+1}^k(x-y) \rho_{q+1}^k(x-z)\right) = - \sum_{n \in \Z} \lambda_{q+1}^{\epsilon_{q+1}-1} \left(\hat{\phi}\left(\lambda_{q+1}^{\epsilon_{q+1}-1}n\right)\right)^2 e^{-2\pi i \lambda_{q+1}^{\epsilon_{q+1}} n k \cdot (y-z)}
\end{equation}
and
\begin{equation}\label{eq:off_mean}
\begin{split}
    \mathbb{P}_{\not=0,x}\left(\rho_{q+1}^k(x-y) \rho_{q+1}^k(x-z)\right) &= \sum_{n_1+n_2 \not = 0} \lambda_{q+1}^{\epsilon_{q+1}-1} \hat{\phi}\left(\lambda_{q+1}^{\epsilon_{q+1}-1}n_1\right) \hat{\phi}\left(\lambda_{q+1}^{\epsilon_{q+1}-1}n_2\right)\\
    &\times e^{2\pi i \lambda_{q+1}^{\epsilon_{q+1}} \left(n_1k \cdot (x-y) + n_2k \cdot (x-z)\right)}.
    \end{split}
\end{equation}
We use the $x$ subscript in ~\eqref{eq:mean_prelim} and ~\eqref{eq:off_mean} to indicate the mean is with respect to the $x$ variable. Note in ~\eqref{eq:mean_prelim}, since we assume that $\phi$ is odd, $\hat{\phi}$ is purely imaginary. Hence $-(\hat{\phi}(\xi))^2 = |\hat{\phi}(\xi)|^2$ and so we can rewrite ~\eqref{eq:mean_prelim} as
\begin{equation}\label{eq:mean}
    \mathbb{P}_{=0,x}\left(\rho_{q+1}^k(x-y) \rho_{q+1}^k(x-z)\right) = \sum_{n \in \Z} \lambda_{q+1}^{\epsilon_{q+1}-1} \left|\hat{\phi}\left(\lambda_{q+1}^{\epsilon_{q+1}-1}n\right)\right|^2 e^{-2\pi i \lambda_{q+1}^{\epsilon_{q+1}} n k \cdot (y-z)}.
\end{equation}
Now using this decomposition we can rewrite ~\eqref{eq:conv_exp} as
\begin{equation}\label{eq:conv_exp_decomp}
    \begin{split}
        B_{q+1,k} &= \int_{\R^d} \int_{\R^d} K_{q+1,k}(y,z) \mathbb{P}_{\not=0,x}\left(\rho_{q+1}^k(x-y) \rho_{q+1}^k(x-z)\right) a_{k,q}(x-y) a_{k,q}(x-z)\, dy\, dz\\
        &+ \int_{\R^d} \int_{\R^d} K_{q+1,k}(y,z) \mathbb{P}_{=0,x}\left(\rho_{q+1}^k(x-y) \rho_{q+1}^k(x-z)\right) a_{k,q}(x-y) a_{k,q}(x-z)\, dy\, dz
    \end{split}
\end{equation}
We denote the expression on the top line of ~\eqref{eq:conv_exp_decomp} as $B_{q+1,k}^1$ and the expression on the second line as $B_{q+1,k}^2$. Starting with $B_{q+1,k}^2$, we have utilizing the fundamental theorem of calculus
\begin{equation*}\label{eq:FTC1}
    a_{k,q}(x-y) = a_{k,q}(x) - y \cdot \int_0^1 \nabla a_{k,q}(x - ty)\, dt.
\end{equation*}
and
\begin{equation*}
    a_{k,q}(x-z) = a_{k,q}(x) - z \cdot \int_0^1 \nabla a_{k,q}(x - tz)\, dt.
\end{equation*}
so
\begin{equation*}
    \begin{split}
        a_{k,q}(x-y)a_{k,q}(x-z) &= a_{k,q}(x)^2\\
        &- a_{k,q}(x) \left(y \cdot \int_0^1 \nabla a_{k,q}(x - ty)\, dt + z \cdot \int_0^1 \nabla a_{k,q}(x - tz)\, dt\right)\\
        &+ \left(y \cdot \int_0^1 \nabla a_{k,q}(x - ty)\, dt\right)\left(z \cdot \int_0^1 \nabla a_{k,q}(x - tz)\, dt\right)
    \end{split}
\end{equation*}
Utilizing this, we may rewrite $B_{q+1,k}^2$ as
\begin{equation}\label{eq:Bq+1k2_decomp}
    \begin{split}
        B_{q+1,k}^2 &= a_{k,q}^2 \int_{\R^d} \int_{\R^d} K_{q+1,k}(y,z) \mathbb{P}_{\not=0,x}\left(\rho_{q+1}^k(x-y) \rho_{q+1}^k(x-z)\right)\, dy\, dz\\
        &- a_{k,q} \int_{\R^d} \int_{\R^d} \int_0^1 K_{q+1,k}(y,z) \mathbb{P}_{\not=0,x}\left(\rho_{q+1}^k(x-y) \rho_{q+1}^k(x-z)\right) y \cdot \nabla a_{k,q}(x - ty) \, dt\, dy\, dz\\
        &- a_{k,q} \int_{\R^d} \int_{\R^d} \int_0^1 K_{q+1,k}(y,z) \mathbb{P}_{\not=0,x}\left(\rho_{q+1}^k(x-y) \rho_{q+1}^k(x-z)\right) z \cdot \nabla a_{k,q}(x - tz) \, dt\, dy\, dz\\
        &+ \int_{\R^d} \int_{\R^d} \int_{[0,1]^2} K_{q+1,k}(y,z) \mathbb{P}_{\not=0,x}\left(\rho_{q+1}^k(x-y) \rho_{q+1}^k(x-z)\right)\\
        &\times y\cdot \nabla a_{k,q}(x-t_1y) z \cdot \nabla a_{k,q}(x - t_2z) \, dt_1\, dt_2\, dy\, dz.
    \end{split}
\end{equation}
We denote each line on the right hand side of ~\eqref{eq:Bq+1k2_decomp} by $B_{q+1,k}^{2,1}$, $B_{q+1,k}^{2,2}$, $B_{q+1,k}^{2,3}$, and $B_{q+1,k}^{2,4}$ respectively. First considering $B_{q+1,k}^{2,1}$ and applying the homogeneity of $m$ we have that
\begin{equation}\label{eq:Bq+1k2,2_1}
    \begin{split}
        B_{q+1,k}^{2,1} &= a_{k,q}^2 \sum_{n \in \Z} \left|\hat{\phi}\left(\lambda_{q+1}^{\epsilon_{q+1}-1}n\right)\right|^2 \int_{\R^d} \int_{\R^d} \lambda_{q+1}^{\epsilon_{q+1}-1} K_{q+1,k}(y,z) e^{-2\pi i \lambda_{q+1}^{\epsilon_{q+1}} nk \cdot (y-z)}\, dy\, dz\\
        &= a_{k,q}^2 \sum_{n \in \Z} \lambda_{q+1}^{\epsilon_{q+1}-1} \left|\hat{\phi}\left(\lambda_{q+1}^{\epsilon_{q+1}-1}n\right)\right|^2 \hat{K}_{q+1,k}\left(\lambda_{q+1}^{\epsilon_{q+1}} nk, -\lambda_{q+1}^{\epsilon_{q+1}} nk\right)\\
        &= a_{k,q}^2 \sum_{n \in \Z} \lambda_{q+1}^{\epsilon_{q+1}-1} \left|\hat{\phi}\left(\lambda_{q+1}^{\epsilon_{q+1}-1}n\right)\right|^2 [m(-\lambda_{q+1}^{\epsilon_{q+1}} nk -\sigma_{q+1}k)\\
        &+ m(-\lambda_{q+1}^{\epsilon_{q+1}} nk +\sigma_{q+1}k)]\left|\hat{K}_{\simeq 1}\left(\lambda_{q+1}^{\epsilon_{q+1}-1}nk\right)\right|^2\\
        &= a_{k,q}^2 \left[m(-k) + m(k)\right] \sum_{n \in \Z} \lambda_{q+1}^{\epsilon_{q+1}-1} \left|\hat{\phi}\left(\lambda_{q+1}^{\epsilon_{q+1}-1}n\right)\right|^2 \left|\hat{K}_{\simeq 1}\left(\lambda_{q+1}^{\epsilon_{q+1}-1}nk\right)\right|^2
    \end{split}
\end{equation}
From standard results in Riemann integration theory we have that
\begin{equation}\label{eq:Riemann}
    \int_{\R} |\hat{\phi}(x)|^2 \left|\hat{K}_{\simeq 1}\left(xk\right)\right|^2\, dx = \sum_{n \in \Z} \lambda_{q+1}^{\epsilon_{q+1}-1} \left|\hat{\phi}\left(\lambda_{q+1}^{\epsilon_{q+1}-1}n\right)\right|^2 \left|\hat{K}_{\simeq 1}\left(\lambda_{q+1}^{\epsilon_{q+1}-1}nk\right)\right|^2 + O\left(\lambda_{q+1}^{\epsilon_{q+1}-1}\right).
\end{equation}
So if we put
$$
C_{q+1,k} = \int_{\R} |\hat{\phi}(x)|^2 \left|\hat{K}_{\simeq 1}\left(xk\right)\right|^2\, dx - \sum_{n \in \Z} \lambda_{q+1}^{\epsilon_{q+1}-1} \left|\hat{\phi}\left(\lambda_{q+1}^{\epsilon_{q+1}-1}n\right)\right|^2 \left|\hat{K}_{\simeq 1}\left(\lambda_{q+1}^{\epsilon_{q+1}-1}nk\right)\right|^2
$$
then from ~\eqref{eq:Riemann} we have that $|C_{q+1,k}| \lesssim \lambda_{q+1}^{\epsilon_{q+1}-1}$. So combining ~\eqref{eq:Bq+1k2,2_1} and ~\eqref{eq:Riemann} we obtain
\begin{equation*}
    B_{q+1,k}^{2,1} = \left[m(k) + m(-k)\right]\left(a_{k,q}^2 \int_{\R} \left|\hat{\phi}(x)\right|^2 \left|\hat{K}_{\simeq 1}(xk)\right|^2\, dx - C_{q+1,k} a_{k,q}^2\right).
\end{equation*}
Summing over $k \in \Omega$ and applying Lemma \ref{lem:lin_alg} we have
\begin{equation*}
    \sum_{k \in \Omega} B_{q+1,k}^{2,1} = \frac{\epsilon_\Omega}{\Vert R_q \Vert_{L^\infty}} k^* - R_q + \sum_{k \in \Omega} C_{q+1,k}a_{k,q}^2 \left[m(k) + m(-k)\right]
\end{equation*}
and thus
\begin{equation}\label{eq:Reynolds_cancel}
\begin{split}
    \left\Vert R_q + \sum_{k \in \Omega} B_{q+1,k}^{2,1} \right\Vert_{\dot{H}^{-s}} &= \left\Vert \frac{\epsilon_\Omega}{\Vert R_q \Vert_{L^\infty}}k^* + \sum_{k \in \Omega} C_{q+1,k} a_{k,q}^2 \left[m(k) + m(-k)\right]\right\Vert_{\dot{H}^{-s}}\\
    &\lesssim \sum_{k \in \Omega} |C_{q+1,k}| \Vert a_{k,q} \Vert_{L^\infty}^2\\
    &\lesssim \lambda_{q+1}^{\epsilon_{q+1}-1}\\
    &< 2^{-2q-100}
\end{split}
\end{equation}
for $\lambda_{q+1}$ chosen large enough. 

Note estimating $B_{q+1,k}^{2,2}$ and $B_{q+1,k}^{2,3}$ is the same procedure, so we choose to focus on $B_{q+1,k}^{2,2}$. From ~\eqref{eq:mean} that we have
\begin{equation}\label{eq:mean_est}
    \left| \mathbb{P}_{=0,x}\left(\rho_{q+1}^k(x-y) \rho_{q+1}^k(x-z)\right)\right| \lesssim \sum_{n \in \Z} \lambda_{q+1}^{\epsilon_{q+1}-1} \left|\hat{\phi}\left(\lambda_{q+1}^{\epsilon_{q+1}-1}n\right)\right|^2 \simeq \Vert \phi \Vert_{L^2}^2 \simeq 1.
\end{equation}
So from ~\eqref{eq:kernel_est} and ~\eqref{eq:mean_est} we obtain
\begin{equation}\label{eq:Bq+1k22_est}
    \begin{split}
        \left\Vert B_{q+1,k}^{2,2} \right\Vert_{\dot{H}^{-s}} &\lesssim \int_{\R^d} \int_{\R^d} \int_0^1 \left|y K(y,z)\right| \left| \mathbb{P}_{=0,x}\left(\rho_{q+1}^k(x-y) \rho_{q+1}^k(x-z)\right)\right| \left\Vert a_{k,q} \nabla a_{k,q}\right\Vert_{\dot{H}^{-s}}\, dt\, dy\, dz\\
        &\lesssim \lambda_{q+1}^{-1}\\
        &< d^{-1}2^{-2q-100}.
    \end{split}
\end{equation}
Utilizing the same methodology we obtain
\begin{equation}\label{eq:Bq+1k23_est}
    \left\Vert B_{q+1,k}^{2,3} \right\Vert_{\dot{H}^{-s}} < d^{-1} 2^{-2q-100}.
\end{equation}
Finally, note that for multi-indices $|\alpha| + |\beta| = 2$ we have that a general term of $B_{q+1,k}^{2,4}$ is given by
\begin{equation}\label{eq:Bq+1k22_decomp}
    \begin{split}
        B_{q+1,k}^{2,4,\alpha,\beta} &:= \int_{\R^d} \int_{\R^d} \int_{[0,1]^2} y^\alpha z^\beta K(y,z) \mathbb{P}_{=0,x}\left(\rho_{q+1}^k(x-y) \rho_{q+1}^k(x-z)\right)\\
        &\times \nabla^\alpha a_{k,q}(x-t_1y) \nabla^\beta a_{k,q}(x - t_2z)\, dt_1\, dt_2\, dy\, dz.
    \end{split}
\end{equation}
and clearly
$$
B_{q+1,k}^{2,4} = \sum_{|\alpha| + |\beta| = 2} B_{q+1,k}^{2,4,\alpha,\beta}.
$$
Utilizing ~\eqref{eq:kernel_est} and ~\eqref{eq:mean_est}, and proceeding much in the same way as ~\eqref{eq:Bq+1k22_est} we obtain
\begin{equation}\label{eq:Bq+1k24_est}
    \begin{split}
        \left\Vert B^{2,4}_{q+1,k}\right\Vert_{\dot{H}^{-s}} &\lesssim \sum_{|\alpha| + |\beta| = 2} \left\Vert B^{2,4,\alpha,\beta}_{q+1,k}\right\Vert_{\dot{H}^{-s}}\\
        &\lesssim \lambda_{q+1}^{-2}\\
        &< d^{-1} 2^{-2q-100}
    \end{split}
\end{equation}
for $\lambda_{q+1}$ chosen large enough. Combining ~\eqref{eq:Reynolds_cancel}, ~\eqref{eq:Bq+1k22_est}, ~\eqref{eq:Bq+1k23_est}, ~\eqref{eq:Bq+1k24_est}, and the fact $|\Omega| = d$ gives
\begin{equation}\label{eq:Bq+1k2_est}
    \left\Vert R_q + \sum_{k \in \Omega} B_{q+1,k}^2\right\Vert_{\dot{H}^{-s}} < 2^{-2q-100} + (3)(d)\left(d^{-1}2^{-2q-100}\right) < 2^{-2q-97}.
\end{equation}

It remains to treat $B_{q+1,k}^1$. For this, utilizing \cite[Lemma 2.16]{GR}\footnote{The proof crucially utilizes \cite[Proposition 1]{BOZ}} we have that
\begin{equation}\label{eq:kato_ponce_est}
    \left\Vert \mathbb{P}_{\not=0,x}\left(\rho_{q+1}^k(x-y) \rho_{q+1}^k(x-z)\right) a_{k,q}(x-y) a_{k,q}(x-z) \right\Vert_{\dot{H}^{-s}_x} \lesssim \left\Vert \rho_{q+1}^k(x-y) \rho_{q+1}^k(x-z)\right\Vert_{\dot{H}^{-s}_x}.
\end{equation}
As before we utilize the $x$ subscript to emphasize the norms are with respect to the $x$ variable. Putting
$$
\mathcal{E}_{y,z}(n_1,n_2) = e^{-2\pi i \lambda_{q+1}^{\epsilon_{q+1}} k \cdot (n_1y + n_2z)}
$$
from ~\eqref{eq:off_mean} we see for $\xi \not =0$ that
\begin{equation}
    \left(\rho_{q+1}^k(x-y) \rho_{q+1}^k(x-z)\right)^{\wedge}(\xi) = \sum_{(n_1,n_2) \in S(\xi)} \lambda_{q+1}^{\epsilon_{q+1}-1} \hat{\phi}\left(\lambda_{q+1}^{\epsilon_{q+1}-1}n_1\right) \hat{\phi}\left(\lambda_{q+1}^{\epsilon_{q+1}-1}n_2\right) \mathcal{E}_{y,z}(n_1,n_2)
\end{equation}
where
$$
S(\xi) = \{(n_1,n_2) \in \Z^d : \lambda_{q+1}^{\epsilon_{q+1}} (n_1 + n_2)k = \xi\}.
$$
If $S(\xi) = \emptyset$ then the Fourier coefficient is of course $0$. So now we compute
\begin{equation}\label{eq:mean_est_2}
    \begin{split}
        &\left\Vert \rho_{q+1}^k(x-y) \rho_{q+1}^k(x-z)\right\Vert_{\dot{H}^{-s}_x}^2\\
        =& \sum_{\xi \not = 0} |\xi|^{-2s} \left|\sum_{(n_1,n_2) \in S(\xi)} \lambda_{q+1}^{\epsilon_{q+1}-1} \hat{\phi}\left(\lambda_{q+1}^{\epsilon_{q+1}-1}n_1\right) \hat{\phi}\left(\lambda_{q+1}^{\epsilon_{q+1}-1}n_2\right) \mathcal{E}_{y,z}(n_1,n_2)\right|^2\\
        \lesssim& \lambda_{q+1}^{2(\epsilon_{q+1}-1)} \sum_{j \in \Z \setminus \{0\}} |\lambda_{q+1}^{\epsilon_{q+1}} jk|^{-2s} \left(\sum_{n_1+n_2 = j} \left|\hat{\phi}\left(\lambda_{q+1}^{\epsilon_{q+1}-1}n_1\right)\right| \left|\hat{\phi}\left(\lambda_{q+1}^{\epsilon_{q+1}-1}n_2\right)\right|\right)^2\\
        \simeq& \lambda_{q+1}^{2\epsilon_{q+1}(1-s)-2} \sum_{j \in \Z \setminus \{0\}} |j|^{-2s} \left| \left( \left|\hat{\phi}\left(\lambda_{q+1}^{\epsilon_{q+1}-1}\cdot\right)\right| \ast \left|\hat{\phi}\left(\lambda_{q+1}^{\epsilon_{q+1}-1}\cdot\right)\right|\right)(j)\right|^2.
    \end{split}
\end{equation}
To complete the estimate, we utilize Young's convolution inequality to compute
\begin{equation}\label{eq:young}
    \begin{split}
        \left\Vert \left|\hat{\phi}\left(\lambda_{q+1}^{\epsilon_{q+1}-1}\cdot\right)\right| \ast \left|\hat{\phi}\left(\lambda_{q+1}^{\epsilon_{q+1}-1}\cdot\right)\right| \right\Vert_{\ell^\infty}^2 &\lesssim \left\Vert \hat{\phi}\left(\lambda_{q+1}^{\epsilon_{q+1}-1} \cdot\right)\right\Vert_{\ell^2}^4\\
        &= \left(\sum_{n \in \Z} \left|\hat{\phi}\left(\lambda_{q+1}^{\epsilon_{q+1}-1} n\right)\right|^2\right)^2\\
        &= \lambda_{q+1}^{2(1-\epsilon_{q+1})} \left(\sum_{n \in \Z} \lambda_{q+1}^{\epsilon_{q+1}-1} \left|\hat{\phi}\left(\lambda_{q+1}^{\epsilon_{q+1}-1} n\right)\right|^2\right)^2\\
        &\simeq \lambda_{q+1}^{2(1-\epsilon_{q+1})} \Vert \phi \Vert_{L^2}^4\\
        &\simeq \lambda_{q+1}^{2(1-\epsilon_{q+1})}
    \end{split}
\end{equation}
and hence combining ~\eqref{eq:mean_est_2} and ~\eqref{eq:young} gives
\begin{equation}\label{eq:off_mean_est_final}
    \left\Vert \rho_{q+1}^k(x-y) \rho_{q+1}^k(x-z)\right\Vert_{\dot{H}^{-s}_x} \lesssim \lambda_{q+1}^{-\epsilon_{q+1} s}.
\end{equation}
And so from ~\eqref{eq:kernel_est}, ~\eqref{eq:kato_ponce_est}, and ~\eqref{eq:off_mean_est_final} we deduce
\begin{equation}\label{eq:bq+1k1_est}
\begin{split}
    \left\Vert B_{q+1,k}^1\right\Vert_{\dot{H}^{-s}} &= \left\Vert \int_{\R^d} \int_{\R^d} K(y,z) \mathbb{P}_{\not=0,x}\left(\rho_{q+1}^k(x-y) \rho_{q+1}^k(x-z)\right) a_{k,q}(x-y)a_{k,q}(x-z)\, dy\, dz\right\Vert_{\dot{H}^{-s}_x}\\
    &\lesssim \int_{\R^d} \int_{\R^d} |K(y,z)| \left\Vert \mathbb{P}_{\not=0,x}\left(\rho_{q+1}^k(x-y) \rho_{q+1}^k(x-z)\right) \right\Vert_{\dot{H}_x^{-s}}\, dy\, dz\\
    &\lesssim \lambda_{q+1}^{-\epsilon_{q+1} s}\\
    &< d^{-1}2^{-2q-100}
    \end{split}
\end{equation}
for $\lambda_{q+1}$ chosen large enough. And so from ~\eqref{eq:Bq+1k2_est} and ~\eqref{eq:bq+1k1_est} this gives
\begin{equation}\label{eq:low_freq_est}
    \begin{split}
        \left\Vert  R_q + \sum_{k \in \Omega} \left(w_{q+1,k}^+ Tw_{q+1,k}^- + w_{q+1,k}^- Tw_{q+1,k}^+\right) \right\Vert_{\dot{H}^{-s}} &\leq \left\Vert R_q + \sum_{k \in \Omega} B^2_{q+1,k} \right\Vert_{\dot{H}^{-s}} + \sum_{k \in \Omega} \left\Vert B^1_{q+1,k}\right\Vert_{\dot{H}^{-s}}\\
        &< 2^{-2q-97} + d\left(d^{-1}2^{-2q-100}\right)\\
        &< 2^{-2q-96}.
    \end{split}
\end{equation}
This completes the analysis of the two low frequency terms identified in ~\eqref{eq:wTw_decomp}. The remaining two high frequency terms are treated in exactly the same way as each other, so we focus on $w_{q+1}^+ Tw_{q+1}^+$. Following the procedure to obtain ~\eqref{eq:bilin_exp^+-}, we have
\begin{equation}
    w_{q+1,k}^+ Tw_{q+1,k}^+ = e^{2\pi i \sigma_{q+1}k \cdot x}\int_{\R^d} \int_{\R^d} M^{+,+}_{q+1,k}(\xi,\eta) \left(a_{k,q} \rho^k_{q+1}\right)^{\wedge}(\xi) \left(a_{k,q} \rho^k_{q+1}\right)^{\wedge}(\eta) e^{2\pi i (\xi + \eta) \cdot x}\, d\xi\, d\eta
\end{equation}
where in this instance $M_{q+1,k}^{+,+} = M_{q+1,k}^{-,+}$. Importantly, this means the inverse Fourier transform of $M_{q+1,k}^{+,+}$ obeys the same estimates given in ~\eqref{eq:kernel_est}. For us, this means
$$
\left\Vert M_{q+1,k}^{+,+} \right\Vert_{L^\infty} \lesssim 1.
$$
We also note that $M_{q+1,k}^{+,+}(\xi,\eta)$ is supported in $B(0,r\lambda_{q+1}) \times B(0,r\lambda_{q+1})$. Now we take the Fourier transform of $w_{q+1,k}^+ Tw_{q+1,k}^+$ to get
\begin{equation*}
    \left(w_{q+1,k}^+Tw_{q+1,k}^+\right)^\wedge(\zeta) = \int_{\R^6} M_{q+1,k}^{+,+}(\xi,\eta) \left(a_{k,q} \rho^k_{q+1}\right)^{\wedge}(\xi) \left(a_{k,q} \rho^k_{q+1}\right)^{\wedge}(\eta) e^{2\pi i (\xi + \eta -\zeta + \sigma_{q+1}k) \cdot x}\, d\xi\, d\eta\, dx
\end{equation*}
In the sense of distributions, we have that
$$
\int_{\R^d} e^{2\pi i (\xi + \eta -\zeta + \sigma_{q+1}k)}\, dx = \delta(\xi + \eta - \zeta + \sigma_{q+1}k)
$$
so
$$
\left(w_{q+1,k}^+Tw_{q+1,k}^+\right)^\wedge(\zeta) = \int_{\R^d} M_{q+1,k}^{+,+}(\xi,\zeta - \xi - \sigma_{q+1}k) \left(a_{k,q} \rho^k_{q+1}\right)^{\wedge}(\xi) \left(a_{k,q} \rho^k_{q+1}\right)^{\wedge}(\zeta - \xi - \sigma_{q+1}k)\, d\xi.
$$
From the support of $M_{q+1,k}^{+,+}$, we see that
$$
|\xi| \leq r\lambda_{q+1} \quad \text{and} \quad |\zeta - \xi - \sigma_{q+1}k| \leq r\lambda_{q+1}
$$
thus
\begin{equation*}
    |\zeta - \sigma_{q+1}k| \leq |\xi| + |\zeta - \xi - \sigma_{q+1}k| \leq 2r\lambda_{q+1}.
\end{equation*}
Thus $\left(w_{q+1,k}^+Tw_{q+1,k}^+\right)^\wedge$ is supported in $B(\sigma_{q+1}k,2r\lambda_{q+1})$ which we see is contained in the annulus $|\zeta| \simeq \lambda_{q+1}$ from ~\eqref{eq:r_constraint} and ~\eqref{eq:c_constraint}. So then
\begin{equation*}
    \begin{split}
        \left\Vert w_{q+1,k}^+Tw_{q+1,k}^+ \right\Vert_{\dot{H}^{-s}}^2 &= \sum_{\zeta \not =0} |\zeta|^{-2s} \left|\left(w_{q+1,k}^+Tw_{q+1,k}^+\right)^\wedge(\zeta)\right|^2\\
        &= \sum_{\zeta \in B(\sigma_{q+1}k,2r\lambda_{q+1})} |\zeta|^{-2s} \left|\left(w_{q+1,k}^+Tw_{q+1,k}^+\right)^\wedge(\zeta)\right|^2\\
        &\lesssim \left(\lambda_{q+1}^{-2s}\right) \lambda_{q+1}^d \left\Vert a_{k,j} \rho_{q+1}^k \right\Vert_{L^2}^4\\
        &\lesssim \lambda_{q+1}^{2(d/2-s)}
    \end{split}
\end{equation*}
and thus since $s$ was chosen to be larger than $d/2$ we have
\begin{equation}\label{eq:high_freq_est_1}
    \left\Vert w_{q+1,k}^+Tw_{q+1,k}^+ \right\Vert_{\dot{H}^{-s}} \lesssim \lambda_{q+1}^{d/2-s} < d^{-1}2^{-2q-100}
\end{equation}
for $\lambda_{q+1}$ chosen large enough. In exactly the same manner, one can establish that
\begin{equation*}
    \left\Vert w_{q+1,k}^-Tw_{q+1,k}^- \right\Vert_{\dot{H}^{-s}} < d^{-1}2^{-2q-100}
\end{equation*}
and so
\begin{equation}\label{eq:high_freq_est_2}
    \left\Vert \sum_{k \in \Omega} w_{q+1,k}^+Tw_{q+1,k}^+ + w_{q+1,k}^-Tw_{q+1,k}^- \right\Vert_{\dot{H}^{-s}} < 2^{-2q-98}.
\end{equation}
Finally from ~\eqref{eq:low_freq_est} and ~\eqref{eq:high_freq_est_2} we have
\begin{equation}\label{eq:low_freq_est_final}
    \left\Vert R_q + \sum_{k \in \Omega} w_{q+1,k}Tw_{q+1,k}\right\Vert_{\dot{H}^{-s}} < 2^{-2q-96} + 2^{-2q-98} < 2^{-2q-95}.
\end{equation}
This concludes the analysis of the low frequency component identified in ~\eqref{eq:wTw_decomp_1}. For the high frequency component, fix $k,k' \in \Omega$ with $k \not= k'$. We have that
$$
\Vert w_{q+1,k}^+Tw_{q+1,k'}^+\Vert_{\dot{H}^{-s}}^2 = \sum_{\xi \not = 0} |\xi|^{-2s} \left|\sum_{\eta \in \Z^d} \hat{w}_{q+1,k}^+(\eta) m(\xi - \eta) \hat{w}_{q+1,k'}^+(\xi - \eta)\right|^2.
$$
In order for $\hat{w}_{q+1,k}^+(\eta) \hat{w}^+_{q+1,k'}(\xi - \eta) \not  = 0$, we require that
$$
\eta \in B(\sigma_{q+1,k}k, r\lambda_{q+1}) \quad \text{and} \quad \xi - \eta \in B(\sigma_{q+1}k',r\lambda_{q+1})
$$
and so
$$
|\eta - \sigma_{q+1}k| \leq r\lambda_{q+1} \quad \text{and} \quad |\xi - \eta - \sigma_{q+1}k'| \leq r\lambda_{q+1}.
$$
Thus
$$
|\xi - \sigma_{q+1}(k+k')| \leq 2r\lambda_{q+1}
$$
and so
$$
\xi \in B(\sigma_{q+1}(k+k'), 2r\lambda_{q+1}).
$$
Note that
$$
|\xi| \geq |\sigma_{q+1} - 2r\lambda_{q+1}||k+k'| = (c - 2r)\lambda_{q+1}|k+k'| \geq \frac{9|k+k'|}{14|k|} \lambda_{q+1} \simeq \lambda_{q+1}
$$
where we have utilized ~\eqref{eq:r_constraint} and ~\eqref{eq:c_constraint} as well as the fact that $\Omega$ is a basis so $k + k' \not= 0$. Therefore we have
\begin{equation*}
    \begin{split}
        \Vert w_{q+1,k}^+Tw_{q+1,k'}^+\Vert_{\dot{H}^{-s}}^2 &= \sum_{|\xi| \simeq \lambda_{q+1}} |\xi|^{-2s} \left|\left(w_{q+1,k}^+ Tw_{q+1,k'}^+\right)^{\wedge}(\xi)\right|^2\\
        &\lesssim \lambda_{q+1}^{2(d/2-s)} \Vert w_{q+1,k}^+ Tw_{q+1,k'}^+ \Vert_{L^1}^2\\
        &\lesssim \lambda_{q+1}^{2(d/2-s)} \Vert w_{q+1,k}^+ \Vert_{L^2}^4\\
        &\lesssim \lambda_{q+1}^{2(d/2-s)}.
    \end{split}
\end{equation*}
Hence for $\lambda_{q+1}$ large enough we obtain
\begin{equation}\label{eq:diff_dir_est1}
    \Vert w_{q+1,k}^+Tw_{q+1,k'}^+\Vert_{\dot{H}^{-s}} < d^{-1}2^{-2q-100}.
\end{equation}
Using a very similar procedure one can establish that
\begin{equation}\label{eq:diff_dir_est2}
    \Vert w_{q+1,k}^+Tw_{q+1,k'}^-\Vert_{\dot{H}^{-s}} < d^{-1}2^{-2q-100},
\end{equation}
\begin{equation}\label{eq:diff_dir_est3}
    \Vert w_{q+1,k}^-Tw_{q+1,k'}^+\Vert_{\dot{H}^{-s}} < d^{-1}2^{-2q-100},
\end{equation}
and
\begin{equation}\label{eq:diff_dir_est4}
    \Vert w_{q+1,k}^-Tw_{q+1,k'}^-\Vert_{\dot{H}^{-s}} < d^{-1}2^{-2q-100}.
\end{equation}
Thus from ~\eqref{eq:diff_dir_est1}-~\eqref{eq:diff_dir_est4} we obtain
\begin{equation}\label{eq:high_freq_est_final}
    \left\Vert \sum_{k \not = k'} w_{q+1,k}Tw_{q+1,k'}\right\Vert_{\dot{H}^{-s}} < 4\left(\frac{d-1}{d}\right)2^{-2q-100} < 2^{-2q-98}.
\end{equation}
And so from ~\eqref{eq:low_freq_est_final} and ~\eqref{eq:high_freq_est_final} we have
\begin{equation}\label{eq:osc_final_est}
    \Vert R_O \Vert_{\dot{H}^{-s}} < 2^{-2q-98} + 2^{-2q-96} < 2^{-2q-93}.
\end{equation}
And so finally from ~\eqref{eq:diss_final_est}, ~\eqref{eq:Nash_final_est}, and ~\eqref{eq:osc_final_est} we have
\begin{equation}\label{eq:Reynolds_final_est}
    \Vert R_{q+1} \Vert_{\dot{H}^{-s}} < 2^{-2q-93} + (2)2^{-2q-100} < 2^{-2q-92} < 2^{-q-1}.
\end{equation}
~\eqref{eq:Reynolds_final_est} establishes \ref{i:3}.

\subsection{Proof of Item \ref{i:4}}
From ~\eqref{eq:wq+1_est} we have
\begin{equation}\label{eq:L^p_est}
    \Vert \theta_{q+1} - \theta_q \Vert_{L^p} = \Vert w_{q+1} \Vert_{L^p} \leq \lambda_{q+1}^{(1-\epsilon_{q+1})\left(\frac{1}{2} - \frac{1}{p}\right)} < 2^{-C'(p)q}
\end{equation}
for $C'(p) = 1/p - 1/2$ and $\lambda_{q+1}$ chosen large enough. For the Besov norm estimate, we utilize the following variant of the classical Bernstein inequality.
\begin{lemma}\label{lem:Bernstein}
    Fix $\alpha \in \R$ and $\lambda > 0$ large. Suppose $u:\T^d \to \R$ is smooth and
    $\operatorname{supp}(\hat{u}) \subset \{\xi : |\xi| \simeq \lambda\}$. Then
    \begin{equation*}
        \Vert u \Vert_{\dot{B}^\alpha_{\infty,\infty}} \lesssim \lambda^\alpha \Vert u \Vert_{L^{\infty}}
    \end{equation*}
\end{lemma}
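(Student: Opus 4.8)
The plan is to use the frequency localization to reduce to only boundedly many Littlewood--Paley shells, and then to bound each surviving shell in $L^\infty$ by $\|u\|_{L^\infty}$ uniformly in the shell index.

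First I would observe that the hypothesis $\operatorname{supp}(\hat u)\subset\{\xi:|\xi|\simeq\lambda\}$, together with the support properties of the $\varphi_j$ from Definition~\ref{def:projs}, forces $\mathbb{P}_{2^j}u=0$ for every $j\ge 0$ with $2^j\not\simeq\lambda$. For the remaining indices --- which form an interval of bounded length, depending only on $\operatorname{supp}\varphi$ and on the constants implicit in $\simeq$ --- one has $2^{j}\simeq\lambda$ and hence $2^{j\alpha}\simeq\lambda^\alpha$, with implicit constant depending on $\alpha$ but not on $\lambda$. Here, for $\alpha<0$, it is essential that $2^j\simeq\lambda$ gives a \emph{two-sided} comparison rather than merely an upper bound on $2^j$.

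Next I would establish the uniform $L^\infty$ bound $\|\mathbb{P}_{2^j}u\|_{L^\infty}\lesssim\|u\|_{L^\infty}$. Writing $\mathbb{P}_{2^j}u=K_j\ast u$, where $K_j$ is the periodization to $\T^d$ of the Schwartz function $2^{jd}\varphi^{\vee}(2^j\,\cdot)$, the Poisson summation formula gives $\|K_j\|_{L^1(\T^d)}\le\|\varphi^{\vee}\|_{L^1(\R^d)}$ for all $j\ge 0$ --- this is precisely the computation underlying Lemma~\ref{lem:proj}, now carried out for the annular multiplier $\varphi$ in place of the bump $\hat{K}_{\simeq 1}$ --- so Young's inequality yields $\|\mathbb{P}_{2^j}u\|_{L^\infty}\le\|\varphi^{\vee}\|_{L^1}\|u\|_{L^\infty}$, with constant independent of $j$.

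Putting the two steps together: for every $j\ge 0$ with $\mathbb{P}_{2^j}u\ne 0$ we obtain $2^{j\alpha}\|\mathbb{P}_{2^j}u\|_{L^\infty}\lesssim\lambda^\alpha\|u\|_{L^\infty}$, and for all other $j$ the expression vanishes; taking the supremum over $j\ge 0$ in the definition of the $\dot{B}^\alpha_{\infty,\infty}$ norm gives $\|u\|_{\dot{B}^\alpha_{\infty,\infty}}\lesssim\lambda^\alpha\|u\|_{L^\infty}$, as claimed. The only step that is not pure bookkeeping is the uniform-in-$j$ $L^\infty\to L^\infty$ bound on the projectors, and even that is a direct transcription of the Poisson-summation argument already invoked for Lemma~\ref{lem:proj}, so I do not anticipate a genuine obstacle.
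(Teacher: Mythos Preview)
Your proposal is correct and gives the standard argument for this Bernstein-type estimate. The paper itself does not supply a proof but simply cites \cite[Lemma 5.7]{GR}; your write-up is precisely the kind of direct Littlewood--Paley/Poisson summation argument one would expect to find behind that citation, so there is nothing to compare.
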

\begin{proof}
    See the proof of \cite[Lemma 5.7]{GR}.
\end{proof}

With this lemma in hand, we consider two cases. Suppose $q(\alpha)$ is the final index such that
$$
\alpha + \frac{1 - \epsilon_{q(\alpha)}}{2} \geq \frac{\alpha}{2}.
$$
If $q + 1 \leq q(\alpha)$, then we have
\begin{equation}\label{eq:Besov_est_1}
    \Vert \theta_{q+1} - \theta_q \Vert_{\dot{B}^\alpha_{\infty,\infty}} = \Vert w_{q+1} \Vert_{\dot{B}^\alpha_{\infty,\infty}} \leq \left(2^{q(\alpha)}\max_{0 \leq n \leq q(\alpha)} \Vert w_n \Vert_{\dot{B}^\alpha_{\infty,\infty}}\right)  2^{-q+1}
\end{equation}
We note the constant in ~\eqref{eq:Besov_est_1} depends only on the regularity parameter $\alpha$ and not $q$. On the other hand when $q + 1 > q(\alpha)$ then using Lemma \ref{lem:Bernstein} we have
\begin{equation}\label{eq:Besov_est_2}
    \Vert \theta_{q+1} - \theta_q \Vert_{\dot{B}^\alpha_{\infty,\infty}} = \Vert w_{q+1} \Vert_{\dot{B}^\alpha_{\infty,\infty}}\lesssim \lambda_{q+1}^{\alpha} \Vert w_{q+1} \Vert_{L^\infty} \lesssim \lambda_{q+1}^{\alpha + \frac{1-\epsilon_{q+1}}{2}} < 2^{-C''(\alpha)q}
\end{equation}
for $C''(\alpha) = -\alpha/2$ and $\lambda_{q+1}$ large enough. Hence from ~\eqref{eq:L^p_est}, ~\eqref{eq:Besov_est_1}, and ~\eqref{eq:Besov_est_2} we obtain that
\begin{equation*}
\begin{split}
    \Vert \theta_{q+1} - \theta_q \Vert_{\dot{B}^\alpha_{\infty,\infty}} + \Vert \theta_{q+1} - \theta_q \Vert_{L^p} &= 2^{-C'(p)q} + \left(2^{q(\alpha)+1} \max_{0 \leq n \leq q(\alpha)} \Vert w_{n} \Vert_{\dot{B}^\alpha_{\infty,\infty}}\right) 2^{-q} + 2^{-C''(\alpha)q}\\
    &< 2^{100}\left(2^{100} + 2^{\tilde{q}+1} \max_{0 \leq n \leq q(\alpha)} \Vert w_n \Vert_{\dot{B}^\alpha_{\infty,\infty}}\right) 2^{-C(\alpha,p)q}\\
    &= 2^{100 + C(\alpha,p)}\left(2^{100} + 2^{\tilde{q}+1} \max_{0 \leq n \leq q(\alpha)} \Vert w_n \Vert_{\dot{B}^\alpha_{\infty,\infty}}\right) 2^{-C(\alpha,p)(q+1)}\\
    \end{split}
\end{equation*}
where
$$
C(\alpha,p) = \min\left(C'(p),C''(\alpha)\right).
$$

\subsection{Proof of Item \ref{i:5}}
We assume that
$$
\Vert \theta_q \Vert_{L^1} > (1 + 2^{-q})\delta.
$$
Then we compute
$$
\Vert \theta_{q+1} \Vert_{L^1} \geq \Vert \theta_q \Vert_{L^1} - \Vert w_{q+1} \Vert_{L^1} > (1 + 2^{-q})\delta - \Vert w_{q+1} \Vert_{L^1}.
$$
Now choose $\lambda_{q+1}$ large enough such that
\begin{equation}\label{eq:delta_est}
    \Vert w_{q+1} \Vert_{L^1} < 2^{-q-1}\delta.
\end{equation}
This gives
$$
\Vert \theta_{q+1} \Vert_{L^1} > (1 + 2^{-q})\delta - 2^{-q-1}\delta = (1-2^{-q-1})\delta
$$

\subsection{Proof of Item \ref{i:6}}
Recall we had chosen $c$ to satisfy the estimates
$$
|k|^{-1} + r \leq c \leq \frac{12}{7}|k|^{-1} - r.
$$
Thus we have that
$$
\lambda_{q+1} \leq (c-r)|k|\lambda_{q+1} \leq (c+r)|k|\lambda_{q+1} \leq \frac{12}{7}\lambda_{q+1}.
$$
Therefore recalling Definition \ref{def:projs} and the fact $\lambda_{q+1}$ is a power of $2$ we have
$$
\mathbb{P}_{\lambda_{q+1}}(\theta_{q+1} - \theta_q) = \mathbb{P}_{\lambda_{q+1}}(w_{q+1}) = w_{q+1}
$$
since
$$
(c-r)|k|\lambda_{q+1} = |\sigma_{q+1}k - r\lambda_{q+1}k|
$$
and
$$
(c+r)|k|\lambda_{q+1} = |\sigma_{q+1}k + r\lambda_{q+1}k|
$$
implying that
$$
\operatorname{supp}(\hat{w}_{q+1}) \subset B(\sigma_{q+1}k,r\lambda_{q+1}) \subset \left\{\xi : \lambda_{q+1} \leq |\xi| \leq \frac{12}{7}\lambda_{q+1}\right\}.
$$

\subsection{Proof of Item \ref{i:7}}
We have that
\begin{equation}\label{eq:off_diag_1}
    \begin{split}
        \sum_{\substack{n,m \leq q+1\\ n \not  = m}} \Vert w_n Tw_m \Vert_{\dot{H}^{-s}}
        &= \sum_{\substack{n,m \leq q\\ n \not  = m}} \Vert w_n Tw_m \Vert_{\dot{H}^{-s}} + \sum_{n \leq q} \Vert w_n Tw_q \Vert_{\dot{H}^{-s}} + \sum_{m \leq q} \Vert w_q Tw_m \Vert_{\dot{H}^{-s}}\\
        &\lesssim C_1 - 2^{-q} + \sum_{n \leq q} \Vert w_n Tw_{q+1} \Vert_{L^1} + \sum_{m \leq q} \Vert w_{q+1} Tw_m \Vert_{L^1}
    \end{split}
\end{equation}
Now we clearly have
\begin{equation}\label{eq:para_est_1}
    \sum_{n \leq q} \Vert w_n Tw_{q+1} \Vert_{L^1} \lesssim \sum_{n \leq q} \Vert w_n \Vert_{L^4} \Vert Tw_{q+1} \Vert_{L^{4/3}} \lesssim \Vert w_{q+1} \Vert_{L^{4/3}} < 2^{-2q-100}
\end{equation}
and
\begin{equation}\label{eq:para_est_2}
    \sum_{m \leq q} \Vert w_{q+1} Tw_{m} \Vert_{L^1} \lesssim \sum_{m \leq q} \Vert w_{q+1} \Vert_{L^{4/3}} \Vert Tw_m \Vert_{L^{4}} \lesssim \Vert w_{q+1} \Vert_{L^{4/3}} < 2^{-2q-100}.
\end{equation}
for $\lambda_{q+1}$ chosen large enough. Thus from ~\eqref{eq:off_diag_1}, ~\eqref{eq:para_est_1}, and ~\eqref{eq:para_est_2} we get
\begin{equation}\label{eq:off_diag_final}
    \sum_{\substack{n,m \leq q+1\\ n \not  = m}} \Vert w_n Tw_m \Vert_{\dot{H}^{-s}} < C_1 - 2^{-q} + 2^{-2q-99} < C_1 - 2^{-q-1}.
\end{equation}
For the on diagonal estimate, we have using \ref{i:3} that
\begin{equation}\label{eq:off_diag_est}
\begin{split}
    \sum_{n \leq q+1} \Vert w_{n} Tw_{n} \Vert_{\dot{H}^{-s}} &= \sum_{n \leq q} \Vert w_{n} Tw_{n} \Vert_{\dot{H}^{-s}} + \Vert w_{q+1} Tw_{q+1} \Vert_{\dot{H}^{-s}}\\
    &< C_2 - 2^{-q+100} + \Vert R_q + w_{q+1} Tw_{q+1} \Vert_{\dot{H}^{-s}} + \Vert R_q \Vert_{\dot{H}^{-s}}\\
    &< C_2 - 2^{-q+100} + 2^{-q-1} + 2^{-q}\\
    &< C_2 - 2^{-q+99}.
    \end{split}
\end{equation}
~\eqref{eq:off_diag_final} and ~\eqref{eq:off_diag_est} combine to complete the proof of \ref{i:7} and therefore Proposition \ref{prop:ind} as well.

\noindent\textsc{Department of Mathematics, Purdue University, West Lafayette, IN, USA.}
\vspace{.03in}
\newline\noindent\textit{Email address}: \href{mailto:ngismond@purdue.edu}{ngismond@purdue.edu}.

\end{document}